\def\be{\begin{equation}}
\def\ee{\end{equation}}
\newcommand {\p} {\partial}
\newtheorem{theorem}{Theorem}[section]
\newtheorem{lemma}[theorem]{Lemma}
\newtheorem{assumption}[theorem]{Assumption}
\newtheorem{corollary}[theorem]{Corollary}
\newtheorem{proposition}[theorem]{Proposition}
\theoremstyle{remark}
\newtheorem{remark}[theorem]{Remark}
\theoremstyle{definition}
\newtheorem{definition}[theorem]{Definition}
\DeclareRobustCommand\widecheck[1]{{\mathpalette\@widecheck{#1}}}
\def\@widecheck#1#2{%
    \setbox\z@\hbox{\m@th$#1#2$}%
    \setbox\tw@\hbox{\m@th$#1%
       \widehat{%
          \vrule\@width\z@\@height\ht\z@
          \vrule\@height\z@\@width\wd\z@}$}%
    \dp\tw@-\ht\z@
    \@tempdima\ht\z@ \advance\@tempdima2\ht\tw@ \divide\@tempdima\thr@@
    \setbox\tw@\hbox{%
       \raise\@tempdima\hbox{\scalebox{1}[-1]{\lower\@tempdima\box
\tw@}}}%
    {\ooalign{\box\tw@ \cr \box\z@}}}
\DeclareMathOperator{\grad}{grad}
\DeclareMathOperator{\tr}{Tr}
\DeclareMathOperator{\di}{div}
\newcommand{\vertiii}[1]{{\left\vert\kern-0.25ex\left\vert\kern-0.25ex\left\vert #1 
		\right\vert\kern-0.25ex\right\vert\kern-0.25ex\right\vert}}
\newcommand{\bigk}{\mathcal K(E,B)}
\newcommand{\bigkp}{\mathcal K(E_+,B_+)}
\newcommand{\bigkm}{\mathcal K(E_m,B_m)}
\newcommand {\R} {\mathbb{R}}
\newcommand{\iu}{\mathrm{i}} 
\newcommand{\je}{\mathfrak J}
\newcommand{\jj}{\tilde{\mathcal J}}
\newcommand{\nn}{N\times N}
\numberwithin{equation}{section}
\newcommand{\Wei}{\mathfrak{H}}
\newcommand{\wei}{\mathfrak{h}}
\newcommand{\lkt}{L_{\tilde\ce}}
\newcommand{\lks}{L_{\ce^*}}
\newcommand{\lk}{L_{\ce}}
\newcommand{\lkss}{L_{\ce_s}}
\newcommand{\mom}{v^{\#}}
\newcommand{\mou}{u^{\#}}
\newcommand{\Lam}{\Lambda}
\date{}
\newcommand{\leql}{\preceq}
\newcommand{\geql}{\succeq}
\newcommand{\Rea}{\mathfrak{Re}}
\newcommand{\Ima}{\mathfrak{Im}}
\newcommand{\ce}{\mathbf{K}}
\newcommand{\aaa}{\delta}
\newcommand{\bbb}{\beta}
\newcommand{\cf}{\mathbf{F}}
\newcommand{\cb}{\mathfrak{D}}
\newcommand{\qq}{Q}
\newcommand{\lamproj}{\pi_\Lam}
\newcommand{\Mmm}{\R^{\nn}_+}
\newcommand{\Mm}{\R^{\nn}_\Lam}
\title[Dafermos' principle]{Dafermos' principle and Brenier's duality scheme for defocusing dispersive equations}
\author[D.~Vorotnikov]{Dmitry Vorotnikov}
\address[D.~Vorotnikov]{University of Coimbra, CMUC, Department of Mathematics,  3001-501 Coimbra, Portugal}{}
\email{mitvorot@mat.uc.pt}
\begin{document}

\begin{abstract} We discover an abstract structure behind several nonlinear dispersive equations (including the  NLS,  NLKG and GKdV equations with generic defocusing power-law nonlinearities) that is reminiscent of hyperbolic conservation laws. The underlying abstract problem admits an ``entropy'' that is formally conserved.  The entropy is determined by a strictly convex function that naturally generates an anisotropic Orlicz space. For such problems, we introduce the dual matrix-valued variational formulation in the spirit of \emph{Y. Brenier. Comm. Math. Phys. (2018) 364(2) 579-605}. Employing time-adaptive weights, we are able to prove consistency of the duality scheme on large time intervals. We also prove solvability of the dual problem in the corresponding anisotropic Orlicz spaces.  As an application, we show that no subsolution of the PDEs that fit into our framework is able to dissipate the total entropy earlier or faster than the strong solution on the interval of existence of the latter.   This result (we call it Dafermos' principle) is new even for ``isotropic'' problems such as the incompressible Euler system. 
\end{abstract}
\maketitle

Keywords: nonlinear PDE, convex duality,  entropy, anisotropic Orlicz space, generalized optimal transport, Dafermos' criterion

\textbf{MSC [2020] 35D99, 35L90, 37K58, 47A56, 49Q99}

\section{Introduction}  
Cauchy problems for nonlinear evolutionary PDEs can have infinitely many weak solutions, cf. \cite{DP79,lel09,lel10,BNF16}. Many of such problems possess a physically relevant quantity (a Hamiltonian, energy or entropy depending on the context) that should be formally conserved along the flow, but can fail to do it for the weak solutions. There is some consensus that for physically relevant solutions this quantity (for definiteness, hereafter we refer to it as the \emph{``total entropy''}) cannot exceed its initial value, cf. \cite{DS17}. No universal selection criterion for weak solutions has been found, but in physics, information theory, chemistry and biology there exist 
numerous variational principles that can potentially assist. The most famous ones are Prigogine’s principle also known as the minimum entropy production principle \cite{Prigozhin} that is applicable to open systems and  Ziegler’s principle also known as the maximum entropy production principle \cite{Ziegler} that is suitable for closed systems. Other notable principles are due to  Onsager, Gyarmati, Berthelot, Swenson, Lotka, Enskog, Kohler, Haken, Paltridge, Malkus, Veronis and Jaynes, see \cite{MS06} for a survey. Such principles can be employed both for derivation of physically relevant systems of PDEs and
for selection of physically relevant solutions \cite{glimm}. In this vein, Dafermos \cite{Daf1,Daf2} suggested to assume that the physically relevant weak solutions dissipate the total entropy earlier and faster than the irrelevant ones. Typically, this kind of entropy\footnote{The wording comes from the theory of conservation laws and might be confusing for some readers. For example, smooth solutions of the heat equation dissipate the Boltzmann entropy, so this entropy is never conserved. In this paper, as we have already agreed, the ``total entropy'' is a formally conserved quantity.} is conserved for smooth solutions but can dissipate for weak solutions due to shocks etc. It is important that the criterion acts locally near the ``bifurcation'' moment of time, i.e.,  before a certain moment the total entropies of a ``good'' and a ``bad'' solution are equal (this stage is optional and the discrepancy can already occur at the initial moment of time), but shortly after that moment the total entropy of a ``bad'' solution becomes larger than the total entropy of a ``good'' solution; however, it is permitted that, as time elapses, the total entropy of a ``bad'' solution returns to being smaller than or equal to the one of a ``good'' solution. 

Appropriateness of Dafermos' principle was examined for various PDEs, see, e.g., \cite{Hsiao,Daf2,Fei1,CK14,CJ16}.  Dafermos' criterion has recently been applied \cite{GK,CA23} at the (wider) level of \emph{subsolutions} to some problems of fluid dynamics. Numerical applications of the criterion  have recently been produced in \cite{Klei22,Klei23,Klei24}. 

For systems of hyperbolic conservation laws
\begin{equation}\label{conslaw} \p_t v= -\p_x (\cf(v)),\end{equation} where $\cf:\R^n \to \R^n$ is a given flux function, there exist classical selection criteria (Lax’ shock condition, the Kruzhkov-Lax entropy criterion, the vanishing viscosity criterion and some others) that under some assumptions are compatible with Dafermos' principle, see \cite{Daf1,KS97}. For equations of Lagrangian and continuum mechanics, a certain least action admissibility criterion has recently been advocated in \cite{gi2024}. A related least action principle was discussed in \cite{GHK}.

For the incompressible Euler and some related equations, Brenier \cite{CMP18,Br20} suggested to search for the solution that minimizes the time average of the kinetic energy. This problem might not always admit a solution, but it generates a dual variational problem that has better convexity features. He found an explicit formula that relates the smooth solutions of the incompressible Euler existing on a small time interval to the solutions of the corresponding dual problem. In \cite{BMO22}, this technique was applied to the multi-stream Euler-Poisson system. In \cite{V22}, we developed Brenier's approach by finding structures in nonlinear quadratic PDEs that permit to define similar dual problems with decent properties. It was also discussed in \cite{CMP18,Br20,V22} that dual problems of this kind have strong analogy with the optimal transportation problems. 

A similar duality scheme has been (rather independently) proposed by Acharya and several collaborators, see \cite{Ach4,Ach6,Ach5} and the references therein. The recent preprint \cite{Ach7} builds upon both approaches. Among other results, the authors of \cite{Ach7} prove the $\Gamma$-convergence of the dual problem for the incompressible Navier-Stokes system to the corresponding dual problem for the incompressible Euler. From the perspective of the analogy with the optimal transport, this result is reminiscent of the $\Gamma$-convergence results from \cite{Leonard12,BaradatMonsaingeon18,MTV23}. A notable feature of \cite{Ach7} is that their construction of the dual problem involves a ``base state'' that can be regarded as an ``initial guess'' for the solution of the primal problem. (From this point of view, the ``base states'' in \cite{CMP18,Br20,BMO22,V22} are identically zero).

It was clear from the very beginning \cite{CMP18,Ach4} that the nonlinearity in a PDE does not need to be quadratic in order to implement the construction. However, the majority of rigorous results for this kind of problems has until now been obtained for quadratic nonlinearities. Of course, for many relevant systems the nonlinearity fails to be quadratic, for instance, for multidimensional systems of conservation laws 
\be \label{e:conslaw} \p_t v=- \di (\cf(v)),\ee where $\cf$ is a prescribed matrix function, and for various nonlinear dispersive equations. Remarkably, rigorous study of Brenier's duality scheme for multidimensional systems of conservation laws \eqref{e:conslaw} that admit a convex entropy, initiated in \cite{CMP18}, is still pending. 

In this paper, we extend Brenier's approach to systems of PDEs that can be rewritten in the abstract form \be \label{o:aeulerint}\p_t v= L(\cf(v)),\ee where the matrix function $\cf$  has  some convexity and positivity properties to be specified below in Assumptions \ref{convf} and \ref{assl}, and $L$ is a vector-valued differential operator of any order. To fix the ideas, we will restrict ourselves to unknown vector functions $v(t,x)$ with $x$ varying in the periodic box $\Omega=\mathbb T^d$. Note that  \eqref{o:aeulerint} is strongly reminiscent of \eqref{conslaw} and \eqref{e:conslaw}. We will also need to assume that \eqref{o:aeulerint} admits a strictly convex anisotropic \emph{entropy} function $\ce(v)$ that is formally conserved along the flow as described in Assumptions \ref{ass1} and \ref{a:consc}. As we will see, the  NLS,  NLKG and GKdV equations with generic defocusing power-law nonlinearities can be recast in our abstract form. 

As an application of our duality scheme, we rigorously prove the following variant of Dafermos' principle: no subsolution of \eqref{o:aeulerint} can dissipate the total entropy earlier or faster than the strong solution on the interval of existence of the latter (Theorem \ref{t:daf}). Our Dafermos' principle is new even for quadratic problems that fit into our framework, including the incompressible Euler equation \cite{CMP18}, the ideal incompressible MHD, the multidimensional Camassa-Holm, the Zakharov-Kuznetsov system and many others (various examples can be found in \cite{V22}).

Our entropy $\ce$ generates an anisotropic Orlicz space, hence our procedure will heavily rely on the theory of anisotropic Orlicz spaces, see, e.g., \cite{Gwiazda}. In particular, we will prove existence of solutions to the dual problem that belong to an anisotropic Orlicz space. Notably, some authors employed isotropic Orlicz spaces for the study of nonlinear Schr\"odinger and generalized KdV equations, cf. \cite{hofer2024}, but we have never heard of any application of anisotropic Orlicz spaces --- that appear so naturally in our generic approach --- to this sort of PDEs. 

Until recently, an unsettling detail of the duality scheme was that the majority of consistency results --- that advocate the possibility to view solutions to the dual problems as certain ``dual variational solutions`'' to PDEs that admit the considered duality approach --- were only obtained on small time intervals, cf. \cite[Theorems 2.3 and 3.1]{CMP18}, \cite[Theorem 3.8]{V22}, \cite[Theorem 5.1]{BMO22}. In this paper, we are able to guarantee the consistency on large time intervals by introducing appropriate time-dependent weights to the problem. Global in time consistency theorems for the dual formulations of incompressible Euler and Navier-Stokes equations have recently been obtained in \cite{Ach7}. The latter results heavily rely on a suitable choice of the ``base state''. Nevertheless, our consistency results  seem to be of a different nature, and, furthermore, the two attitudes complement each other to a certain degree, see Remark \ref{remcons} for a thorough comparison.

 We believe that it is potentially possible to prove the weak-strong uniqueness property in the sense that existence of a (suitably defined) strong solution to \eqref{o:aeulerint} implies that the solution to the dual problem is unique in a certain anisotropic Orlicz space. The corresponding result in the quadratic case $\cf(v)=v\otimes v$ was established in \cite[Section 5]{V22}. As the first step towards this conjecture, we will show that the strong solution is always unique. The proof is unusual because it relies on the analysis of a specific ``Jeffreys divergence''.
 
 To the best of our knowledge, global in time solvability for the considered dispersive PDEs is merely known under significant restrictions on the exponents in the power laws or on the size of the initial data, cf. \cite{sulem2007nonlinear,LP14,FRR23}. Our Theorem \ref{t:ex} and Remark \ref{r:time}  provide existence of certain ``dual variational solutions" for these problems without such restrictions. 

The paper consists of seven sections (including the Introduction) and two appendices. In Section \ref{orl}, we list some  necessary facts about $N$-functions and anisotropic Orlicz spaces.  In Section \ref{s:abst}, we describe our abstract formalism. In Section \ref{Secx}, we establish the consistency of the dual problem, and explain how it can be secured on large time intervals. As an application, we prove the already mentioned version of Dafermos' principle. In Section \ref{Secsol}, we show that the dual problem always has a solution that belongs to a specific anisotropic Orlicz space. In Section \ref{Secun}, we obtain uniqueness of strong solutions. In Section \ref{Sec3}, we demonstrate how the  NLS,  NLKG and GKdV equations with generic defocusing power-law nonlinearities (as well as scalar conservation laws) fit into our framework. In Appendix \ref{la1}, we prove a technical anisotropic variant of Hardy's inequality (we do not claim much novelty here, but we failed to find it in the existing literature).  In Appendix \ref{bot}, we employ an unsophisticated geometric point of view in order to illustrate the analogy of the dual problems in question with the optimal transport theory.

\section{Anisotropic Orlicz spaces} \label{orl} In this section we collect some basic necessary facts about $N$-functions and anisotropic Orlicz spaces.  
 
 \begin{definition}[Young function] A convex function $\Phi:\R\to \R_+$ is called a \emph{Young function} provided $\Phi$ is even, $\Phi(s)=0$ if and only if $s=0$, and $$\lim_{s\to 0} \frac {\Phi(s)} s=0,\ \lim_{s\to +\infty} \frac {\Phi(s)} s=+\infty.$$

 	\end{definition}
 	
 	Note that the Legendre transform $\Phi^*$ of a Young function is well defined and is also a Young function. 
 	
 \begin{definition}[$N$-function] A convex function $\ce:\R^n\to \R_+$ is called an \emph{$N$-function} provided $\ce$ is even, and there exist two Young functions $\Phi_-$ and $\Phi_+$ such that $$\Phi_-(|y|)\le \ce (y) \le \Phi_+(|y|), \ y\in \R^n.$$

 \end{definition}
 
 It follows from the definition that if $\ce$ is an $N$-function, then 
 
 (i) $\ce(y)=0$ if and only if $y=0$;
 
 (ii) The Legendre transform $\ce^*$ is well-defined and is also an $N$-function. 
 
 \begin{definition}[$\Delta_2$-condition] An $N$-function $\ce:\R^n\to \R_+$ satisfies the \emph{$\Delta_2$-condition} provided\footnote{Hereafter, we write $b \lesssim a$ for two scalar expressions $a$ and $b$ to indicate that $b \leq C a$ with a uniform constant $C$.  } $$\ce (2y) \lesssim \ce(y)+1, \ y\in \R^n.$$

 \end{definition}
 
\begin{assumption} Henceforth in this section, we fix an $N$-function $\ce$. We always assume that both $\ce$ and $\ce^*$ satisfy the  $\Delta_2$-condition, and all the facts are claimed under this assumption. \end{assumption}

Let $\qq$ be either an interval, or a flat torus (periodic box), or the Cartesian product of an interval and a torus, equipped with the Lebesgue measure $dx$.
\begin{definition}[Modular and anisotropic Orlicz class] The integral $$\int_\qq \ce(f(x))\,dx$$ is called the \emph{modular} of a measurable function $f:\qq\to \R^n$. Denote by $\lk(\qq;\R^n)$  the set of functions $f$ whose modular is finite. \end{definition}
Note that  $\lk(\qq;\R^n)$  is a linear space, cf. \cite[Lemma 3.3.1]{Gwiazda}, and it is called an \emph{anisotropic Orlicz space}. For $\ce(v)=\frac {|v|^p}{p}$, $p>1$, the spaces $\lk(\qq;\R^n)$ are classical Lebesgue spaces $L^p(\qq;\R^n)$ . 

There are two different ways to define a relevant norm on $\lk(\qq;\R^n)$.
\begin{definition}[Luxemburg and Orlicz norms] The \emph{Luxemburg norm} of $f\in \lk(\qq;\R^n)$ is defined as $$\|f\|_{\lk(\qq)}:=\inf\left\{\lambda>0: \int_{\qq}\ce\left(\frac {f(x)}{\lambda}\right)\,dx\leq 1\right\}.$$ 
The \emph{Orlicz norm} of $f\in \lk(\qq;\R^n)$ is defined as $$\vertiii{f}_{\lk(\qq)}:=\sup \left\{\int_{\qq}f(x)\cdot g(x)\,dx: \int_{\qq}\ce^*\left(g(x)\right)\,dx\leq 1\right\}.$$ \end{definition} Both norms are indeed norms. Moreover, $\lk(\qq;\R^n)$ equipped with any of these norms is a separable reflexive Banach space, see \cite{Gwiazda}. Both norms are equivalent with uniform constants: \be \|f\|_{\lk(\qq)}\leq \vertiii{f}_{\lk(\qq)} \leq 2\|f\|_{\lk(\qq)}.\ee Furthermore, the strong topology associated with both norms coincides with the \emph{modular topology} \cite{Gwiazda}, i.e., 
\be \|f_k\|_{\lk(\qq)}\to 0\ \Leftrightarrow\ \int_\qq \ce(f_k(x))\,dx\to 0\ee as $k\to +\infty$.
\begin{proposition}[H\"older inequality \cite{Gwiazda}] For every $f\in \lk(\qq;\R^n)$ and $g\in \lks(\qq;\R^n)$ one has $$\left| \int_{\qq}f(x)\cdot g(x)\,dx\right|\le 2  \|f\|_{\lk(\qq)} \|g\|_{\lks(\qq)}.$$
\end{proposition}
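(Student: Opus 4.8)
The plan is to reduce the estimate to the pointwise Fenchel--Young inequality
\[ y\cdot z \le \ce(y) + \ce^*(z), \qquad y,z\in\R^n, \]
which is immediate from the definition of the Legendre transform $\ce^*(z)=\sup_{y}\bigl(y\cdot z-\ce(y)\bigr)$. First I would dispose of the degenerate cases: if $\|f\|_{\lk(\qq)}=0$, then, the Luxemburg functional being a genuine norm, $f=0$ a.e., so both sides of the asserted inequality vanish; similarly if $\|g\|_{\lks(\qq)}=0$. Hence we may assume $\lambda:=\|f\|_{\lk(\qq)}>0$ and $\mu:=\|g\|_{\lks(\qq)}>0$.

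Next I would record that the infimum defining the Luxemburg norm is attained at the level $1$, i.e.
\[ \int_\qq \ce\!\left(\frac{f(x)}{\lambda}\right)dx \le 1, \qquad \int_\qq \ce^*\!\left(\frac{g(x)}{\mu}\right)dx \le 1. \]
For the first of these: since $\ce$ is even, convex and vanishes only at the origin, the map $t\mapsto \ce(f(x)/t)$ is non-increasing on $(0,\infty)$ for each $x$; hence $\int_\qq \ce(f/\lambda')\,dx\le 1$ for every $\lambda'>\lambda$ by definition of $\lambda$, and letting $\lambda'\downarrow\lambda$ and invoking monotone convergence gives the bound. (Alternatively one cites this fact directly from \cite{Gwiazda}.) The second bound is obtained in the same way.

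With these in hand the proof is a one-line computation. Applying Fenchel--Young pointwise with $y=f(x)/\lambda$ and $z=g(x)/\mu$, we first get $|f(x)\cdot g(x)|\le \lambda\mu\,\ce(f(x)/\lambda)+\lambda\mu\,\ce^*(g(x)/\mu)$ (using again that $\ce$ is even), so $f\cdot g\in L^1(\qq)$ and the integral is meaningful; then integrating the signed inequality,
\[ \frac{1}{\lambda\mu}\int_\qq f(x)\cdot g(x)\,dx \le \int_\qq \ce\!\left(\frac{f(x)}{\lambda}\right)dx + \int_\qq \ce^*\!\left(\frac{g(x)}{\mu}\right)dx \le 2, \]
whence $\int_\qq f\cdot g\,dx\le 2\lambda\mu$. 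Replacing $f$ by $-f$ yields the reverse estimate, and the two together give $\bigl|\int_\qq f\cdot g\,dx\bigr|\le 2\|f\|_{\lk(\qq)}\|g\|_{\lks(\qq)}$.

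The only point requiring any care is the attainment step in the second paragraph; it can be avoided altogether by keeping $\lambda'>\lambda$ and $\mu'>\mu$, deducing $\int_\qq f\cdot g\,dx\le 2\lambda'\mu'$ directly, and then passing to the infimum over $\lambda'$ and $\mu'$. Either way there is no genuine obstacle here: the content of the statement is essentially the Fenchel--Young inequality combined with the normalization built into the Luxemburg norm.
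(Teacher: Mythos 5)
Your proof is correct, and it is the standard argument: pointwise Fenchel--Young plus the normalization built into the Luxemburg norm, with the factor $2$ arising from summing the two unit modular bounds. The paper gives no proof of its own but cites \cite{Gwiazda}, whose proof is essentially identical, so there is nothing to flag.
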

Another useful estimate, \cite[Lemma 3.1.14]{Gwiazda}, is \be \label{e:modcont} \|f\|_{\lk(\qq)} \leq \max\left\{1, \int_\qq \ce(f(x))\,dx\right\}.\ee  
\begin{proposition}[Duality \cite{Gwiazda}] Bounded linear functionals on $\lk(\qq;\R^n)$ can be represented in the form $$f \mapsto \int_{\qq}f(x)\cdot g(x)\,dx$$ for some $g\in  \lks(\qq;\R^n)$. Consequently, $\lks(\qq;\R^n)$ is the dual space of $\lk(\qq;\R^n)$ .
\end{proposition}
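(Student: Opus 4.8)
The plan is to run the classical Riesz-type representation argument for Orlicz spaces in the vector-valued, anisotropic setting, the only genuinely delicate point being that the density produced by Radon--Nikodym a priori lies only in $L^1_{\mathrm{loc}}$ and must be upgraded to $\lks$.

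I would first dispose of the easy inclusion. For $g\in\lks(\qq;\R^n)$ the H\"older inequality stated above shows that $\Lambda_g\colon f\mapsto\int_\qq f\cdot g\,dx$ is a bounded linear functional on $\lk(\qq;\R^n)$, with $\|\Lambda_g\|\le 2\|g\|_{\lks(\qq)}$; and $g\mapsto\Lambda_g$ is injective because $\Lambda_g\equiv 0$, tested against $\mathbf{1}_A e_i$ over measurable sets $A$ of finite measure and coordinate vectors $e_i$, forces $g=0$ a.e. So $\lks(\qq;\R^n)$ embeds into the dual and it remains to prove surjectivity.

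Given $\Lambda\in(\lk(\qq;\R^n))^*$, for each coordinate $i$ I would set $\nu_i(A):=\Lambda(\mathbf{1}_A e_i)$ on measurable $A\subseteq\qq$ of finite measure. Finite additivity is immediate, and countable additivity follows because for $A_j\downarrow\emptyset$ one has $\ce(\mathbf{1}_{A_j}e_i)=\ce(e_i)\,\mathbf{1}_{A_j}\to 0$ pointwise and dominated by an integrable function, so the modular of $\mathbf{1}_{A_j}e_i$ tends to $0$, hence so does its norm (the modular and norm topologies coincide), hence $\nu_i(A_j)\to 0$; moreover $\nu_i\ll dx$ trivially. Radon--Nikodym then supplies $g_i\in L^1_{\mathrm{loc}}(\qq)$ with $\nu_i=g_i\,dx$; writing $g=(g_1,\dots,g_n)$ and using linearity, $\Lambda(s)=\int_\qq s\cdot g\,dx$ for every simple $s$ of finite-measure support.

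The crux, and the step I expect to be the main obstacle, is to show $g\in\lks(\qq;\R^n)$, since the obvious estimates tangle the modular of $g$ with itself. I would argue by exhaustion. Choose measurable $A_k\uparrow\qq$ of finite measure with $|g|\le k$ on $A_k$, so that $g\,\mathbf{1}_{A_k}\in L^\infty\subseteq\lks$; then H\"older's inequality together with the density of simple functions in $\lk$ (this is where the $\Delta_2$-condition on $\ce$ is used) lets me extend the identity $\Lambda(h\,\mathbf{1}_{A_k})=\int_{A_k}g\cdot h\,dx$ from simple $h$ to all $h\in\lk(\qq;\R^n)$. Hence, for every $h$ with $\int_\qq\ce(h)\,dx\le 1$ we have $\|h\,\mathbf{1}_{A_k}\|_{\lk(\qq)}\le\|h\|_{\lk(\qq)}\le 1$, so $\int_{A_k}g\cdot h\,dx=\Lambda(h\,\mathbf{1}_{A_k})\le\|\Lambda\|$; taking the supremum over such $h$ bounds the Orlicz norm $\vertiii{g\,\mathbf{1}_{A_k}}_{\lks(\qq)}\le\|\Lambda\|$, whence the Luxemburg norm satisfies $\|g\,\mathbf{1}_{A_k}\|_{\lks(\qq)}\le M:=\|\Lambda\|$, and therefore $\int_{A_k}\ce^*(g/M)\,dx\le 1$ by the definition of the Luxemburg norm. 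Monotone convergence as $k\to\infty$ gives $\int_\qq\ce^*(g/M)\,dx\le 1$, so $g/M\in\lks$ and $g\in\lks(\qq;\R^n)$. Finally $\Lambda$ and $\Lambda_g$ are bounded functionals agreeing on the simple functions, which are dense in $\lk(\qq;\R^n)$ by the $\Delta_2$-condition, so $\Lambda=\Lambda_g$; this also explains the role of the standing $\Delta_2$-assumptions, without which the dual would be strictly larger than $\lks$.
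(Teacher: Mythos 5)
The paper does not prove this proposition; it simply cites \cite{Gwiazda}, so there is no in-paper argument to compare against. Your proof is the standard Riesz/Radon--Nikodym argument for Orlicz duality, correctly adapted to the vector-valued anisotropic setting, and it is essentially the argument the cited reference contains. All the steps check out: since $\qq$ is of finite Lebesgue measure (an interval, a torus, or a product thereof), the functions $\mathbf{1}_A e_i$ lie in $\lk(\qq;\R^n)$, the dominated-convergence step giving $\sigma$-additivity of $\nu_i$ is legitimate, the equivalence of the modular and norm topologies (a consequence of the $\Delta_2$-conditions) correctly converts modular decay into norm decay, and the exhaustion $A_k=\{|g|\le k\}$ together with the Luxemburg--Orlicz norm comparison $\|\cdot\|\le\vertiii{\cdot}$ and monotone convergence cleanly upgrades $g$ from $L^1$ to $\lks(\qq;\R^n)$. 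Your closing remark about the role of the $\Delta_2$-assumptions (density of simple functions in $\lk$ for the extension step, and the failure of reflexivity without them) is also accurate.
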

We will need the following (refined) embedding of anisotropic Orlicz spaces.  \begin{proposition} Let $\ce, \tilde \ce$ be two $N$-functions. Assume that there exists $r>0$ such that $\ce (v) \leq \tilde \ce (v)$ for $|v|\geq r$. Then for any $f\in \lkt(\qq;\R^n)$ we have $f\in \lk(\qq;\R^n)$ and $$\|f\|_{\lk(\qq)}\le C_r \|f\|_{\lkt(\qq)}$$ where $C_r$ merely depends on $r$ and $\qq$.     \label{la0} \end{proposition}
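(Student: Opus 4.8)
The plan is to reduce the claim to a purely modular estimate and then exploit the non-homogeneity of the modular through a scaling argument. We may assume $\lambda:=\|f\|_{\lkt(\qq)}>0$, since for $\lambda=0$ we have $f=0$ almost everywhere and there is nothing to prove. As $f\in\lkt(\qq;\R^n)$ has finite modular, dominated convergence (using that $t\mapsto\tilde\ce(tf(x))$ is convex, even and vanishes at $t=0$, hence nondecreasing on $[0,\infty)$) shows that $\int_\qq\tilde\ce(f(x)/\mu)\,dx\to0$ as $\mu\to+\infty$, so the set appearing in the definition of the Luxemburg norm is nonempty; picking an admissible sequence $\mu_k\downarrow\lambda$ and invoking monotone convergence, I would first record the inequality $\int_\qq\tilde\ce(f(x)/\lambda)\,dx\le1$.

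Next I would set $M_r:=\sup_{|w|\le r}\ce(w)$, which is finite because the real-valued convex function $\ce$ is continuous on $\R^n$ and hence bounded on the closed ball of radius $r$. Splitting $\qq$ into $A:=\{x:\ |f(x)|\ge r\lambda\}$ and $B:=\qq\setminus A$, on $A$ one has $|f(x)/\lambda|\ge r$, so the hypothesis gives $\ce(f(x)/\lambda)\le\tilde\ce(f(x)/\lambda)$, while on $B$ one has $\ce(f(x)/\lambda)\le M_r$. Integrating and using the previous step,
\[
\int_\qq\ce\!\left(\frac{f(x)}{\lambda}\right)dx\ \le\ \int_A\tilde\ce\!\left(\frac{f(x)}{\lambda}\right)dx+M_r\,|B|\ \le\ 1+M_r\,|\qq|,
\]
where $|\qq|<\infty$ since $\qq$ is a bounded interval, a torus, or a product thereof.

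The decisive point is that this additive constant can be absorbed into the norm rather than requiring any smallness: put $C_r:=1+M_r\,|\qq|\ge1$, and observe that convexity of $\ce$ together with $\ce(0)=0$ yields $\ce(w/C_r)\le C_r^{-1}\ce(w)$ for every $w\in\R^n$. Hence
\[
\int_\qq\ce\!\left(\frac{f(x)}{C_r\lambda}\right)dx\ \le\ \frac{1}{C_r}\int_\qq\ce\!\left(\frac{f(x)}{\lambda}\right)dx\ \le\ 1,
\]
so by definition of the Luxemburg norm $\|f\|_{\lk(\qq)}\le C_r\lambda=C_r\|f\|_{\lkt(\qq)}$, with $C_r$ depending only on $r$, on $|\qq|$, and on the fixed function $\ce$. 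That $f$ actually belongs to $\lk(\qq;\R^n)$ then follows from the finiteness of the last modular together with the $\Delta_2$-condition for $\ce$ (applied a finite number of times when $C_r\lambda>1$, and trivially by convexity when $C_r\lambda\le1$). I expect no serious obstacle here; the only things that require a little care are making the passage to the infimum in the Luxemburg norm rigorous and realizing that the contribution of the ``small'' region $B$ must be killed by rescaling, not by any genuine estimate on $B$ itself, since no control of $\ce$ by $\tilde\ce$ is available for small arguments.
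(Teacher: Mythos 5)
Your proof is correct, and it is essentially the argument the paper has in mind: the paper omits the proof and refers to the classical isotropic case in Krasnoselskii--Rutickii (Theorem~13.3), which proceeds by exactly this split of $\qq$ into a ``large-argument'' set, where the pointwise comparison $\ce\leq\tilde\ce$ applies, and a ``small-argument'' set, where $\ce$ is bounded, followed by a modular-to-norm rescaling using convexity and $\ce(0)=0$. The only remark worth making is that the dominated-convergence and monotone-convergence steps you use to establish $\int_\qq\tilde\ce(f/\lambda)\,dx\leq1$ and the nonemptiness of the admissible set are standard facts about the Luxemburg norm and could simply be cited; otherwise the argument is complete and faithful to the classical one, mutatis mutandis for the anisotropic setting.
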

The proof of Proposition \ref{la0} is omitted because the classical proof for the isotropic case, cf. \cite[Theorem 13.3]{KR}, is applicable here as well, mutatis mutandis.  
\begin{proposition}[Density of smooth functions] Smooth functions are dense in $\lk(\qq;\R^n)$. 
\end{proposition}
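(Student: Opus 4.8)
The plan is to exploit the two facts already recorded above: first, that the strong (norm) topology on $\lk(\qq;\R^n)$ coincides with the modular topology, so that it suffices to approximate a given $f \in \lk(\qq;\R^n)$ by smooth functions in the modular sense, i.e.\ to produce smooth $f_k$ with $\int_\qq \ce(f(x)-f_k(x))\,dx \to 0$; and second, that $\ce$ satisfies the $\Delta_2$-condition, which is exactly the ingredient that makes modular convergence compatible with the usual truncation-plus-mollification scheme. Since $\ce$ is an $N$-function it is dominated above by a Young function $\Phi_+$ of one variable, and one has the pointwise bound $\ce(y) \le \Phi_+(|y|)$; conversely $\Phi_-(|y|) \le \ce(y)$, so finiteness of the modular forces the appropriate integrability. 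I would carry out the approximation in three successive reductions.

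\textbf{Step 1: reduce to bounded functions with compact support (when $\qq$ is an interval or a product).} Given $f \in \lk(\qq;\R^n)$, set $f_N := f\,\mathbf{1}_{\{|f|\le N\}}\,\mathbf{1}_{K_N}$ where $K_N$ is an exhaustion of $\qq$ by compact sets (on the torus factor no spatial truncation is needed). Then $|\ce(f - f_N)| = \ce(f)\,\mathbf{1}_{\{|f|>N\}\cup (\qq\setminus K_N)}$ pointwise, and since $\ce(f) \in L^1(\qq)$ this tends to $0$ in $L^1$ by dominated convergence. Hence $f_N \to f$ modularly, and thus in norm.

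\textbf{Step 2: mollify.} For $g \in L^\infty(\qq;\R^n)$ with compact support, let $g_\varepsilon := g * \rho_\varepsilon$ with a standard mollifier (on the torus, convolution on the group). Then $g_\varepsilon$ is smooth, $\|g_\varepsilon\|_\infty \le \|g\|_\infty$, and $g_\varepsilon \to g$ in $L^1$ and a.e.\ along a subsequence. Since $\ce$ is continuous and $\ce(g-g_\varepsilon)$ is dominated by $\sup_{|y|\le 2\|g\|_\infty}\ce(y)$ times the indicator of a fixed compact set — an integrable majorant — dominated convergence again gives $\int_\qq \ce(g - g_\varepsilon)\,dx \to 0$. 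Combining Steps 1 and 2 via a diagonal argument and the triangle inequality in the Luxemburg norm yields smooth approximants of $f$.

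\textbf{Main obstacle.} The genuine subtlety is not in the mollification (which is handled by plain dominated convergence once we are bounded) but in making sure Step 1 is legitimate — that is, that $\lk(\qq;\R^n)$ is really the modular \emph{space} and not merely the modular \emph{class}, so that truncations converge in norm; this is precisely where $\ce \in \Delta_2$ is indispensable, since without it the bounded compactly supported functions need not be dense and the norm and modular topologies diverge. One must also be slightly careful on the unbounded-interval case (if $\qq$ is an interval of infinite length) that the spatial truncation in Step 1 is performed before, not after, mollification, so that the mollified functions retain compact support and the majorants in Step 2 stay integrable. All of this is classical in the isotropic theory (cf.\ \cite[Theorem 10.2]{KR}) and transfers verbatim using the one-variable bounds $\Phi_-(|y|)\le \ce(y)\le \Phi_+(|y|)$; alternatively one may simply cite \cite{Gwiazda}, where this density statement is established for anisotropic Orlicz spaces under exactly the standing $\Delta_2$ assumption.
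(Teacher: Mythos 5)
Your proof is correct, but it takes a genuinely different route from the paper's. The paper proceeds via an embedding argument: using \cite[Lemma 2.3.16]{Gwiazda} it extracts $p>1$, $r>0$ with $\ce(v)\le |v|^p/p$ for $|v|\ge r$, invokes Proposition~\ref{la0} to conclude $L^p(\qq)\subset\lk(\qq)$ with a continuous embedding, cites \cite[Corollary 3.4.12]{Gwiazda} for density of simple functions in $\lk(\qq;\R^n)$, and finishes by observing that smooth functions approximate simple functions in $L^p$ (hence in $\lk$). You instead run a direct truncation-plus-mollification scheme at the modular level: truncate in range and support so that $\int_\qq\ce(f-f_N)\,dx\to 0$ by dominated convergence (since $\ce(f-f_N)=\ce(f)\mathbf{1}_{\{|f|>N\}\cup(\qq\setminus K_N)}$ is dominated by $\ce(f)\in L^1$), then mollify the bounded compactly supported pieces, where the modular of the difference is controlled by a bounded integrable majorant, and finally glue via the equivalence of modular and norm topologies, which is where $\ce\in\Delta_2$ enters. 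Your argument is more self-contained --- it does not presuppose density of simple functions in $\lk$ and makes the role of $\Delta_2$ explicit --- whereas the paper's route is shorter because it delegates the smooth-approximation step to the classical $L^p$ theory and the simple-function density to a black-boxed reference. Both are sound; note that in this paper $\qq$ always has finite measure (interval, torus, or their product, with the interval implicitly bounded so that Proposition~\ref{la0} applies), so the caution you raise about infinite-measure intervals, while a reasonable instinct, is not needed here.
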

\begin{proof} This fact is well known, so we just give a sketch of the proof.  It follows from \cite[Lemma 2.3.16]{Gwiazda} that there exist $r>0$ and $p>1$ such that $\ce(v)\leq \frac {|v|^p} p$ for $|v|\geq r$. By Proposition \ref{la0}, $L^p(\qq)\subset \lk(\qq)$ and $\|\cdot\|_{\lk(\qq)}\le C_r \|\cdot\|_{L^p(\qq)}$. Simple functions are dense in $\lk(\qq;\R^n)$ by \cite[Corollary 3.4.12]{Gwiazda}. On the other hand, in any $L^p$-neighborhood (and hence in any $L_\ce$-neighborhood) of a simple function there is a smooth function. \end{proof}

We will also need the following variant of Krasnoselskii's theorem about continuity of Nemytskii operators. 
\begin{proposition} \label{p:nemyt} Let $\eta:\R^n\to \R^m$ be a continuous function such that the Nemytskii operator $$\mathcal N: \lk(\qq;\R^n) \to L^1(\qq;\R^m),\ \mathcal N(w)(x)=\eta(w(x))$$ is well-defined. Then $\mathcal N$ is continuous. 
\end{proposition}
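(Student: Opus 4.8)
The plan is to imitate the classical proof of Krasnoselskii's theorem, replacing the $L^p$-convergence argument by the modular-topology characterization of convergence in $\lk(\qq;\R^n)$ recorded above. First I would argue by contradiction and via subsequences: suppose $w_k \to w$ in $\lk(\qq;\R^n)$ but $\mathcal N(w_k)\not\to\mathcal N(w)$ in $L^1(\qq;\R^m)$. Passing to a subsequence, we may assume $\|\mathcal N(w_k)-\mathcal N(w)\|_{L^1(\qq)}\geq\delta>0$ for all $k$. Since $\|w_k-w\|_{\lk(\qq)}\to 0$, the equivalence of the strong and modular topologies gives $\int_\qq\ce(w_k(x)-w(x))\,dx\to 0$; in particular $w_k\to w$ in measure, because $\ce$ is an $N$-function with $\ce(y)=0$ iff $y=0$ (so the modular controls the measure of the set $\{|w_k-w|\geq\varepsilon\}$ from below by $\ce(\varepsilon)\,\mathrm{meas}$). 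Extract a further subsequence along which $w_k\to w$ almost everywhere on $\qq$, and simultaneously (by the modular topology again, or by the $\Delta_2$-condition) a dominating function: there is $h\in\lk(\qq;\R^n)$ such that, along the subsequence, $|w_k(x)|\le |h(x)|$ for a.e.\ $x$. This last step is the standard ``dominated subsequence'' lemma; in the Orlicz setting it follows because a subsequence for which $\int_\qq\ce(w_{k_{j+1}}-w_{k_j})\,dx\le 2^{-j}$ can be chosen, and then $h:=|w|+\sum_j|w_{k_{j+1}}-w_{k_j}|$ has finite modular by convexity of $\ce$ together with the $\Delta_2$-condition, hence lies in $\lk(\qq;\R^n)$.

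With the a.e.\ convergence and the domination in hand, continuity of $\eta$ gives $\eta(w_k(x))\to\eta(w(x))$ a.e., and $|\eta(w_k(x))|\le \sup_{|y|\le|h(x)|}|\eta(y)|=:G(x)$ a.e. The key point is that $G\in L^1(\qq)$: this I would deduce precisely from the hypothesis that $\mathcal N$ is well-defined on all of $\lk(\qq;\R^n)$. Indeed, by a theorem of Krasnoselskii type (a purely measure-theoretic statement about superposition operators), if $\eta$ is continuous and the map $w\mapsto\eta(w(\cdot))$ sends $\lk(\qq;\R^n)$ into $L^1(\qq;\R^m)$, then $\eta$ obeys a growth bound $|\eta(y)|\le a(x)+b\,\Psi(y)$ with $a\in L^1$ and $\Psi$ controlled by $\ce$ — in particular $\sup_{|y|\le\rho}|\eta(y)|$, as a function of $\rho$, grows no faster than (a multiple of) $\int_\qq\ce$ allows, so that $G(x)=\sup_{|y|\le|h(x)|}|\eta(y)|$ is integrable whenever $h\in\lk(\qq;\R^n)$. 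Granting this, the Lebesgue dominated convergence theorem yields $\mathcal N(w_k)\to\mathcal N(w)$ in $L^1(\qq;\R^m)$ along the subsequence, contradicting $\|\mathcal N(w_k)-\mathcal N(w)\|_{L^1(\qq)}\ge\delta$. Hence no such bad sequence exists and $\mathcal N$ is continuous.

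The main obstacle is the step just described: extracting, from the mere well-definedness of $\mathcal N$ on the whole space $\lk(\qq;\R^n)$, a quantitative domination $\sup_{|y|\le|h(x)|}|\eta(y)|\in L^1(\qq)$ for every $h\in\lk(\qq;\R^n)$. In the classical $L^p$ setting this is exactly Krasnoselskii's lemma (if the superposition operator maps $L^p$ into $L^1$ then $|\eta(y)|\le a(x)+b|y|^p$), proved by a gliding-hump/closed-graph argument: one assumes the growth bound fails, builds disjointly supported ``bumps'' $w$ on which $\eta(w(\cdot))$ has arbitrarily large $L^1$-mass while $w$ stays in a bounded subset of the domain, and assembles them into a single element of $\lk(\qq;\R^n)$ whose image is not integrable, a contradiction. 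I would carry out the same construction here, using \eqref{e:modcont} and the $\Delta_2$-condition to keep the modular of the assembled function finite while the modular of its image diverges; this is the only place where the anisotropy and the Orlicz structure (rather than a plain $L^p$) genuinely intervene, and it is where the proof requires care rather than a verbatim transcription of the isotropic argument. All remaining steps — convergence in measure from modular convergence, the dominated-subsequence extraction, and the final application of dominated convergence — are routine once the equivalence of modular and norm topologies from Section \ref{orl} is invoked.
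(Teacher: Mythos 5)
Your proposal departs from the route the paper has in mind. The paper appeals to Krasnoselskii's original argument in \cite[Ch.\ 1, Thm.\ 2.1]{Kold}, which proves continuity \emph{directly} by a contradiction/gliding-hump construction: assuming $w_k\to w$ in $\lk$ but $\|\eta(w_k)-\eta(w)\|_{L^1}\geq\delta$, one patches disjoint pieces of the $w_k$ into a single $\tilde w\in\lk(\qq;\R^n)$ whose image under $\mathcal N$ fails to be integrable, with no growth bound ever invoked. You instead take a detour through a preliminary growth bound $|\eta(y)|\lesssim 1+\ce(y)$ followed by dominated convergence. The growth bound itself is indeed provable by the bump construction you sketch (take $y_j$ with $|\eta(y_j)|>j(1+\ce(y_j))$ and disjoint $E_j$ with $|E_j|\sim (j^2(1+\ce(y_j)))^{-1}$), but the dominated-convergence step, as you have set it up, breaks down.

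The gap is the claim that $G(x):=\sup_{|y|\le|h(x)|}|\eta(y)|$ lies in $L^1(\qq)$ whenever $h$ has finite modular. Even with the growth bound in hand, this would require $\sup_{|y|\le|h(x)|}\ce(y)$ to be integrable, i.e.\ it requires the \emph{radial} majorant $\rho\mapsto\sup_{|y|\le\rho}\ce(y)$ to be controlled along $h$ by the modular of $h$ — and that is exactly what anisotropy destroys. Concretely, take $\ce(y_1,y_2)=y_1^2+y_2^4$ (a valid $N$-function satisfying $\Delta_2$, with $\Delta_2$ dual), $\qq=(0,1)$, $h(x)=(x^{-1/3},0)$: then $\int_\qq\ce(h)=\int_0^1 x^{-2/3}\,dx<\infty$, so $h$ has finite modular, yet $\sup_{|y|\le|h(x)|}\ce(y)\sim x^{-4/3}$ near $0$, which is not integrable; your dominant $G$ simply fails to be in $L^1$. (There is also a type mismatch lurking here: your $h$ is a scalar magnitude, whereas the Orlicz class $\lk(\qq;\R^n)$ is an anisotropic class of $\R^n$-valued maps, so ``$h\in\lk$'' does not directly mean what you need.) The repair within your strategy is to dominate the \emph{modular}, not the radius: after passing to a subsequence with $\sum_j\int_\qq\ce(w_{k_j}-w)\,dx<\infty$, set $\Sigma(x):=\sum_j\ce(w_{k_j}(x)-w(x))\in L^1(\qq)$; the $\Delta_2$-condition and convexity give $\ce(w_{k_j}(x))\lesssim\Sigma(x)+\ce(w(x))+1$ a.e., and combined with $|\eta(y)|\lesssim 1+\ce(y)$ this yields an integrable dominant for $\eta(w_{k_j})$, after which dominated convergence closes the argument. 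Alternatively, you can skip the growth bound altogether and run the gliding hump directly on $\eta(w_k)-\eta(w)$, as the paper's reference does.
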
 We omit the proof because  in the original proof of Krasnoselskii, see \cite[Chapter 1, Theorem 2.1]{Kold}, it is possible to substitute the anisotropic Orlicz source $\lk$  for the Lebesgue source $L^p$ without destroying the validity of the argument\footnote{The case of anisotropic targets is more tricky and lies beyond this discussion; for instance, the proof for isotropic Orlicz sources and targets that can be found in \cite{KR} does not work in the anisotropic case.}.

We finish this section with a statement about mollification. We restrict ourselves to the case when $\qq$ is a $d$-dimensional periodic box. Let $\rho$ be a smooth compactly supported probability density on $\R^d$. Set $\rho_k(x):=k\rho(kx)$, $k\in \mathbb N$. For any fixed $f\in \lk(\qq;\R^n)$, the convolution $\rho_k*f$ is well-defined, smooth, and belongs to $\lk(\qq;\R^n)$. 

\begin{proposition} \label{p:conv} We have $\rho_k*f\to f$ strongly in $\lk(\qq;\R^n)$ as $k\to +\infty$. \end{proposition}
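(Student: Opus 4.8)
The plan is to combine three ingredients: the density of smooth functions in $\lk(\qq;\R^n)$ established above, a uniform-in-$k$ bound showing that mollification is a contraction on $\lk(\qq;\R^n)$, and the elementary fact that $\rho_k*g\to g$ uniformly when $g$ is smooth. Since $\ce$ and $\ce^*$ satisfy the $\Delta_2$-condition, the norm topology of $\lk(\qq;\R^n)$ agrees with the modular topology, so it will suffice to control, at each step, whichever of the Luxemburg norm or the modular $\int_\qq\ce(\rho_k*f-f)\,dx$ is more convenient.

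First I would prove that $\|\rho_k*h\|_{\lk(\qq)}\le\|h\|_{\lk(\qq)}$ for every $h\in\lk(\qq;\R^n)$ and every $k$. Fix $\lambda>\|h\|_{\lk(\qq)}$, so that $\int_\qq\ce(h/\lambda)\,dx\le1$. Since $\ce$ is convex and $\rho_k(y)\,dy$ is a probability measure on $\R^d$, Jensen's inequality gives, for a.e.\ $x$,
\[
\ce\!\left(\frac{(\rho_k*h)(x)}{\lambda}\right)=\ce\!\left(\int_{\R^d}\rho_k(y)\,\frac{h(x-y)}{\lambda}\,dy\right)\le\int_{\R^d}\rho_k(y)\,\ce\!\left(\frac{h(x-y)}{\lambda}\right)dy.
\]
Integrating in $x$ over $\qq$, applying Tonelli's theorem, and using that $\qq$ is a torus — so that each translate $h(\cdot-y)$ has the same modular as $h$ — the right-hand side becomes $\int_{\R^d}\rho_k(y)\bigl(\int_\qq\ce(h(z)/\lambda)\,dz\bigr)dy\le1$. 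Hence $\int_\qq\ce((\rho_k*h)/\lambda)\,dx\le1$, i.e.\ $\|\rho_k*h\|_{\lk(\qq)}\le\lambda$, and letting $\lambda\downarrow\|h\|_{\lk(\qq)}$ proves the claim. (This is essentially the statement that translations act isometrically on $\lk(\qq;\R^n)$, averaged against $\rho_k$.)

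Next I would observe that for smooth $g$ on $\qq$ one has $\rho_k*g\to g$ uniformly as $k\to+\infty$, by the classical argument using uniform continuity of $g$ and the concentration of $\supp\rho_k$ at the origin. Because $\qq$ has finite measure and $\ce$ is continuous with $\ce(0)=0$, uniform convergence forces $\int_\qq\ce(\rho_k*g-g)\,dx\to0$, and hence $\|\rho_k*g-g\|_{\lk(\qq)}\to0$, since modular convergence to zero is equivalent to norm convergence to zero. Finally, given $f\in\lk(\qq;\R^n)$ and $\varepsilon>0$, pick a smooth $g$ with $\|f-g\|_{\lk(\qq)}<\varepsilon$; the contraction bound then yields
\[
\|\rho_k*f-f\|_{\lk(\qq)}\le\|\rho_k*(f-g)\|_{\lk(\qq)}+\|\rho_k*g-g\|_{\lk(\qq)}+\|g-f\|_{\lk(\qq)}<2\varepsilon+\|\rho_k*g-g\|_{\lk(\qq)},
\]
so $\limsup_{k\to\infty}\|\rho_k*f-f\|_{\lk(\qq)}\le2\varepsilon$; letting $\varepsilon\downarrow0$ finishes the proof.

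None of the steps is genuinely hard; the point that deserves the most care is the contraction estimate, where one must make sure Jensen's inequality is legitimately applied to the convex $N$-function $\ce$ of a vector argument against the probability measure $\rho_k(y)\,dy$, and that the ensuing change of variables is justified precisely by the periodicity of $\qq$ (on a general bounded domain the translated modular would only be an integral over a shifted domain, and one would have to handle boundary effects). As a variant, one could skip the density step at the very end and instead run the same Jensen/Tonelli computation directly on $h=f(\cdot-y)-f$, reducing the statement to the continuity of translations in $\lk(\qq;\R^n)$ together with $\int_{\R^d}\rho_k(y)\,dy=1$ and $\supp\rho_k\to\{0\}$; that route still needs density of smooth functions, but only in order to prove translation continuity.
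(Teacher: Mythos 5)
Your proof is correct and essentially the same as the paper's: the paper's sketch invokes continuity of the translation map (which it deduces from density of smooth functions) and then "mimics the Donaldson--Trudinger argument," which is precisely the route you outline as your variant at the end, while your primary contraction-plus-density argument is just a mild repackaging of the same ingredients. The Jensen/Tonelli contraction estimate you spell out is the key convexity observation underlying the Donaldson--Trudinger lemma in the anisotropic setting, and your care about the role of periodicity of $\qq$ is well placed.
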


\begin{proof} We give just a brief sketch of the proof. It follows from the density of smooth functions that the ``translation'' map $$x\mapsto f(\cdot-x)$$ is well-defined and continuous from $\R^d$ to $\lk(\qq;\R^n)$. We conclude by mimicking the ``isotropic'' Donaldson-Trudinger argument from \cite[Lemma 2.1]{DT}.  \end{proof}

\section{The abstract setting} \label{s:abst}

Let us start with fixing some basic notation. In the what follows, $\Omega$ is the periodic box $\mathbb T^d$, $d\in \mathbb N$, equipped with the Lebesgue measure $dx$. For two measurable functions $u,v:\Omega\to \R^m$, $m\in \mathbb N$, we will use the shortcut $$(u,v):=\int_\Omega u(x)\cdot v(x)\,dx.$$ 
We will also use the notations $\R^{\nn}$, $\R^{\nn}_s$ and $\Mmm$ for the spaces of $N\times N$ matrices, symmetric matrices and nonnegative-definite matrices, resp. For $A,B\in \R^{\nn}_s$, we write $A\geql B$ when $A-B\in \Mmm$ (the Loewner order \cite{L30}; we stress that the notation $A\ge B$ is reserved for a less restrictive order that will introduced in Assumption \ref{assl} below), and $A:B$ for the scalar product of $A$ and $B$ generated by the Frobenius norm $|\cdot|$.  The symbol $I$ will stand for the identity matrix of a relevant size.

As anticipated in the Introduction, we are interested in the problem \be \label{o:aeuler}\p_t v= L(\cf(v)), \ v(0,\cdot)=v_0.\ee Here $v_0:\Omega\to \R^n$, $n\in \mathbb N$, is the initial datum, $v:[0,T]\times \Omega\to \R^n$ is an unknown vector function, $$\cf:\R^n\to \R^{\nn}_s$$ is a prescribed $C^2$-smooth matrix function with some convexity and positivity properties to be specified below, and $L$ is a vector-valued differential operator\footnote{As a matter of fact, $L$ can be a more general (and possibly nonlocal) linear operator; it is easy to see from the considerations below that it is enough to assume that $L$ commutes with the mollification $L(\rho_k*M)=\rho_k*L(M)$ and is a closed linear operator with a dense domain $D(L)$ in a certain space.} with constant coefficients $$L(\Xi)_i=\sum_{j=0}^{\nu} \sum_{l,m=1}^{N}  \sum_{|\alpha|=j}  b_{ilm\alpha} \p_\alpha \Xi_{lm},\ i=1,\dots,n,$$ where $\alpha$ is a generic multiindex and $\Xi(x)\in \R^{\nn}_s$. 


\begin{assumption}[Entropy] \label{ass1} Define the ``entropy'' function for our problem by $$\ce: \R^n \to \R,\ \ce(v):=\frac 1 2 \tr (\cf(v)-\cf(0)).$$ Assume that $\ce$  is strictly convex (but not necessarily uniformly convex). Assume also that $\ce$ is an $N$-function, and both $\ce$ and its Legendre transform $\ce^*$ satisfy the $\Delta_2$-condition. \end{assumption}

To any given $v\in \R^n$ we associate the ``sharp'' vector $$\mom:=\nabla \ce(v),$$ where the gradient $\nabla$ is naturally taken w.r.t. $v$. Note that $0^{\#}=0$ because $\nabla \ce (0)=0$.  Since $\ce$  is strictly convex, the map $$\nabla \ce: v \mapsto \mom$$ is $C^1$-smooth and injective. Moreover, $$v=\nabla \ce^*(\mom)$$ for any $\mom\in \nabla \ce(\R^n)=\R^n$.  

It follows from \cite[Lemma 2.3.16]{Gwiazda} that there exist $p,q>1$ and $r>0$ such that $$\frac {v\cdot  \mom} {\ce(v)}\leq p,\quad  \frac {\mom \cdot  v} {\ce^*(\mom)}\geq q$$ for $|v|\ge r$.  Consequently, \be \label{e:lham} \ce^*(\mom)\lesssim \ce (v)+1, \ v\in \R^n.\ee Similarly, \be \label{e:lham2} \ce (v)\lesssim \ce^*(\mom)+1, \ v\in \R^n.\ee
 
\begin{assumption}[$\Lam$-convexity] \label{convf} We will require some convexity of the matrix function $\cf$.  The classical notion of convexity of matrix-valued functions is the Loewner convexity \cite{KR36,BW}, namely, $\cf:\R^n\to \R^{\nn}_s$ is Loewner convex if $\cf:P$ is a  convex function for any matrix $P\in  \Mmm$. This is too restrictive for our purposes, cf. Remark \ref{lowtoostrong} and Appendix \ref{bot}, so we will merely assume that $\cf$ is \emph{$\Lam$-convex} in the following more relaxed sense. Let $u:\Omega \to \R^n$ be an arbitrary smooth function, and let $L^*$ be the differential operator adjoint to $L$. Let $\Lam\subset \R_s^{\nn}$ be the smallest linear subspace of $\R_s^{\nn}$ independent of $u$ and containing $I$ such that $L^*(u(x))\in \Lam$, $x\in \Omega$.  We say that $\cf$ is $\Lam$-convex if $\cf:P$ is a  convex function for any $P\in \Lam \cap \Mmm$. Of course, any Loewner convex matrix function is $\Lam$-convex, but not vice versa. Notably, it follows from the $\Lam$-convexity of $\cf$ that $\ce$ is convex.  \end{assumption}

\begin{assumption}[$\Lam$-order] \label{assl} We will also require some positivity of the matrix function $\cf$. For this purpose, we will use the less restrictive $\Lam$-order instead of the Loewner order. Namely, denote $$\Mm=\{\Xi\in \R_s^{\nn}|\ \Xi:P\geq 0, \ \forall P \in  \Lam \cap \Mmm\}.$$ We will assume that \be \cf(\R^n)\subset \Mm.\ee For $A,B\in \R^{\nn}_s$, we write $B\le A$ when $A-B\in \Mm$.  Obviously, $B \leql A$ implies $B \le A$, and the converse is true only if $\Lam=\R_s^{\nn}$. Employing some simple algebra, it is easy to derive from $\cf\ge 0$ that
\be\label{e:lessss0} |\lamproj \cf(v)| \lesssim \ce(v)+1,\ee where $\lamproj:\R_s^{\nn}\to \Lam$ denotes the orthogonal projection w.r.t. the Frobenius product. We will also need to assume that pointwise control similar to \eqref{e:lessss0} holds at the level of partial derivatives $$\cf_l(v):=\cf_{v_l}(v), \l=1,\dots,n.$$  More exactly, we assume that
\be\label{e:lessss} | \lamproj  \partial_l \cf(v)| \lesssim |\nabla \ce(v)| +1.\ee 
\end{assumption}

\begin{remark} \label{mos} In some examples, including the incompressible Euler equation,  cf. \cite{CMP18,V22}, $\Lam$ coincides with $\R_s^{\nn}$, hence in this case the $\Lam$-order and the $\Lam$-convexity coincide with their Loewner counterparts. For the dispersive equations considered in Section \ref{Sec3}, the subspace $\Lam$ is significantly smaller than $\R_s^{\nn}$. \end{remark}

\begin{remark} The ``flux'' $\cf(v)$ can be viewed as an ``entropy matrix''  that very vaguely resembles the entropy matrix from \cite{Shen24}. Notably, the definition of convexity of matrix-valued functions employed in \cite{Shen24} is very restrictive, it is much stronger than the Loewner convexity. \end{remark}

\begin{remark} The quadratic flux $\cf(v)=v\otimes v$ and the corresponding entropy $\ce(v)=\frac 1 2 |v|^2$ obviously satisfy Assumptions \ref{ass1}, \ref{convf} and \ref{assl} for any subspace $\Lam\subset \R_s^{n\times n}$. Note that in this case $N=n$ and $\mom=v$. This applies to the incompressible Euler and other quadratic examples mentioned in the Introduction. \end{remark}

\begin{assumption}[Conservativity] \label{a:consc} We will focus on the situation when $L$ satisfies the ``formal conservativity condition'' \be \label{o:acons} (\cf(v), L^*(\mom))=0\ee provided $\mom:\Omega\to \R^n$ is a smooth function (remember that $v=\nabla \ce^*(\mom)$). \end{assumption}

Problem \eqref{o:aeuler} admits the following natural weak formulation: \be \label{o:w1}\int_0^T \left[(v,\p_t a)+(\cf(v), L^*a)\right]\, dt+(v_0, a(0,\cdot))=0\ee for all smooth vector fields $a: [0,T]\times \Omega\to \R^n$, $a(T,\cdot)\equiv 0$.

\begin{remark}[Formulation in terms of the sharp variable] In the conservative case \eqref{o:aeuler} admits a formally equivalent formulation in terms of the ``sharp'' unknown variable $\mom$: \be \label{o:aeuler2}\p_t (\mom)_l+L^* (\mom):\partial_l \cf(\nabla \ce^*(\mom))=0, \quad \mom(0,\cdot)=\mom_0:=(v_0)^\#,\ l=1,\dots,n.\ee  Indeed, let $a: [0,T]\times \Omega \to \R^n$ be a smooth test function, $s\in \R$ be a real number, and $v$ be a solution to \eqref{o:aeuler}. Then due to the conservativity \be  (\cf(v+sa), L^*(\nabla \ce(v+sa)))=0.\ee Taking the derivative w.r.t. $s$ at $s=0$ yields \be \label{o:aeulepp} \sum_{l=1}^n \left[(\p_l\cf(v)a_l, L^*(\mom))+(L(\cf(v)),\p_{v_l}(\nabla \ce (v))a_l)\right]=0.\ee
Using \eqref{o:aeuler}, we recast \eqref{o:aeulepp} in the form 
\be \label{o:aeulepp1} \sum_{l=1}^n (\p_l\cf(v):L^*(\mom),a_l)+\sum_{l,m=1}^n(\p_t v_m,\p_{lm}\ce (v)a_l)=0.\ee
By the chain rule, $\p_t (\mom)_l=\p_t (\p_l \ce(v))=\sum_{m=1}^n \p_{lm}\ce (v)\p_t v_m$. Thus \eqref{o:aeulepp1} becomes \be \label{o:aeulepp2} \sum_{l=1}^n (\p_l\cf(v):L^*(\mom)+\p_t (\mom)_l,a_l)=0,\ee which immediately implies \eqref{o:aeuler2} due to arbitrariness of $a$ and the relation $v=\nabla \ce^*(\mom)$. \end{remark}


Let us now rewrite problem \eqref{o:w1} in terms of the test functions $B:=L^*a$ and $E:=\p_t a$ (note that the conservativity \eqref{o:acons} is not needed at this stage). We first observe that \be \label{o:vab} (v_0, a(0))=-\int_0^T \left(v_0,   \p_t a\right)\, dt=-\int_0^T \left(v_0,   E\right)\, dt. \ee  The link between $B$ and $E$ can alternatively be described by the conditions \be\label{o:constr1}\p_t B=L^* E, \quad B(T,\cdot)\equiv 0.\ee 

Hence, \eqref{o:w1} becomes \be \label{o:w2}\int_0^T \left[(v-v_0,E)+(\cf(v), B)\right]\, dt=0\ee for all smooth vector fields $B: [0,T]\times \Omega\to \R^{\nn}_s$, $E: [0,T]\times \Omega\to \R^n$ satisfying the constraints \eqref{o:constr1}.

Observe that \eqref{o:constr1} can be rewritten in the following weak form \be \label{o:constrweak}\int_0^T \left[(B,\p_t \Psi)+(E, L \Psi)\right]\, dt=0\ee for all smooth matrix fields $\Psi: [0,T]\times \Omega\to \R^{\nn}_s$, $\Psi(0)= 0$.

Motivated by the discussion above, we adopt the following definitions, where we tacitly assume  $v_0\in \lk(\Omega;\R^n)$. 
\begin{definition}[Weak solutions] A function \be \label{e:vclass} v\in \lk((0,T)\times \Omega;\R^n)\ee  is a \emph{weak solution} to \eqref{o:aeuler} if it satisfies \eqref{o:w2} for all pairs \be \label{o:be} (E,B)\in \lks ((0,T)\times \Omega;\R^n)\times L^\infty((0,T)\times \Omega;\R^{\nn}_s)\ee meeting the constraint \eqref{o:constrweak}. \end{definition}

Note that if $v$ is a weak solution, then $\cf(v)\in L^1((0,T)\times \Omega;\R^{\nn}_s)$ due to \eqref{e:lessss0}, so \eqref{o:w2} makes sense. 

\begin{definition}[Subsolutions] \label{d:ss} A pair of functions  \be(v,M)\in \lk((0,T)\times \Omega;\R^n)\times L^1((0,T)\times \Omega;\R^{\nn}_s), \ \cf(v)\leq M, \label{e:lorder} \ee is a \emph{subsolution} to \eqref{o:aeuler} if it satisfies \be \label{o:w2sub}\int_0^T \left[(v-v_0,E)+(M, B)\right]\, dt=0\ee for all pairs $$ (E,B)\in \lks ((0,T)\times \Omega;\R^n)\times L^\infty((0,T)\times \Omega;\R^{\nn}_s)$$ meeting the constraint \eqref{o:constrweak}. \end{definition}
The second inequality  in \eqref{e:lorder} is understood in the sense of the $\Lam$-order a.e. in $(0,T)\times \Omega$. Obviously, if $v$ is a weak solution, then $(v,\cf(v))$ is a subsolution. Accordingly, the subsolution \emph{entropy} function  that complies with Assumption \ref{ass1} is $$M\mapsto \frac 1 2 \tr (M-\cf(0)),\ M\in \Mm.$$

Observe that for a weak solution $v$ the modular (that we will call the \emph{total entropy}) \begin{equation} \label{o:consp1} K(t):=\int_\Omega \ce (v(t,x))\,d x.\end{equation} belongs to $L^1(0,T)$ and thus is finite for almost all times. Similarly, the total entropy \begin{equation} \tilde K(t):=\frac 1 2 \tr\int_\Omega \left[M(t,x)-\cf(0)\right]\,d x=\frac 1 2 (M(t,\cdot)-\cf(0),I).\end{equation} of a subsolution also belongs to $L^1(0,T)$. 

\begin{remark} For the incompressible Euler equation, in view of Remark \ref{mos} and some observations made in \cite{DS17,V22}, Definition \ref{d:ss} is equivalent  to the conventional definition of a subsolution  that, among other applications, is used in the theory of convex integration \cite{lel09,lel10}. \end{remark}

Our primal problem is to search for the weak solutions to \eqref{o:aeuler} that minimize a suitable  weighted time integral of the total entropy\footnote{See also Remark \ref{r:zar}.} \eqref{o:consp1}. 

More exactly, fix a positive scalar function $\wei(t)$ that is bounded away from $0$ and $\infty$ on $[0,T]$. Let $\Wei(t):=\int_t^T \wei(s)\, ds$. The idea is to search for the weak solutions that minimize $\int_0^T \wei(t) K(t)\, dt$. 

\begin{remark}[``Rough'' Dafermos' criterion] A typical weight is $\wei(t):=\exp(-\gamma t)$  with large $\gamma>0$, cf. Remark \ref{r:timeei}. In this case our selection criterion can be viewed as a ``rough'' Dafermos' principle,  because we prioritize the solutions that dissipate the total entropy as fast and as early as possible and that at later times do not admit dramatic exponential growth (of order $\exp(\gamma t)$) of the total entropy. \end{remark}

We will now employ the "sharp" formulation \eqref{o:aeuler2} in order to define \emph{strong solutions} to \eqref{o:aeuler}.

\begin{definition}[Strong solutions] \label{d:strongclass} Assume that $v_0\in \lk(\Omega;\R^n)$. A function $v$  satisfying \eqref{e:vclass} and \be\label{o:strongclass} \p_t (\Wei\mom)\in \lks ((0,T)\times \Omega;\R^n), \ \Wei L^* (\mom)\in L^\infty ((0,T)\times \Omega; \R^{\nn}_s) \ee  is a \emph{strong solution} to \eqref{o:aeuler} if it is a weak solution and \be \label{o:aeuler22}\p_t (\mom)_l+L^* (\mom):\partial_l \cf(\nabla \ce^*(\mom))=0, \quad \mom(0,\cdot)=\mom_0\ee  a.e. in $(0,T)\times \Omega$ and in $\Omega$, resp. \end{definition}

\begin{remark} \label{r:regul1} Let us explain why \eqref{o:aeuler22} makes sense for $v$ from the regularity class \eqref{e:vclass}, \eqref{o:strongclass}. Firstly, \eqref{e:lham} and \eqref{e:vclass} imply $\mom \in \lks((0,T)\times \Omega; \R^n)$. The first equality in \eqref{o:aeuler22} is equivalent to \be \label{o:aeuler2w}\wei (\mom)_l+\p_t (\Wei\mom)_l+\Wei L^* (\mom):\partial_l \cf(\nabla \ce^*(\mom))=0\ee a.e. in $(0,T)\times \Omega$. By \eqref{e:lessss}, $\lamproj \partial_l \cf(v)$ is pointwise controlled by $\nabla \ce(v)=\mom$, therefore all members of \eqref{o:aeuler2w} belong to $\lks((0,T)\times \Omega; \R^n)$. We moreover claim that \be \mom \in C([0,T);\lks(\Omega; \R^n))\ee and that all members in the second equality in \eqref{o:aeuler22} belong to $\lks(\Omega; \R^n)$. Indeed, since $\ce^*(\mom_0)\lesssim \ce (v_0)+1$, we have $\mom_0\in \lks(\Omega; \R^n)$. But $$\p_t \mom=\frac 1 \Wei \p_t (\Wei\mom)+\frac \wei \Wei \mom\in L_{\ce^*, loc}([0,T)\times \Omega;\R^n)\subset L^{1}_{loc}([0,T); L^1(\Omega)),$$ whence $$ \mom \subset AC_{loc}([0,T); L^1(\Omega)).$$ Leveraging convexity of $\ce^*$, for every $t\in [0,T)$ and small $h>0$ we obtain  \begin{multline}\int_\Omega \ce^*(\mom(t+h,x)- \mom(t,x))\,dx= \int_\Omega \ce^*\left(\frac 1 h \int_{t}^{t+h} h \p_t \mom(s,x)\,ds\right)\,dx\\ \le  \int_\Omega \frac 1 h \int_{t}^{t+h} \ce^*\left( h \p_t \mom(s,x)\right)\,ds\,dx\\=\int_\Omega \frac 1 h \int_{t}^{t+h} \ce^*\left( h \p_t \mom(s,x)+(1-h)0\right)\,ds\,dx\\ \le \int_\Omega \frac 1 h \int_{t}^{t+h} \left(h\ce^*\left(\p_t \mom(s,x)\right)+(1-h)\ce^*\left(0\right)\right)\,ds\,dx\\=\int_\Omega \int_{t}^{t+h} \ce^*\left(\p_t \mom(s,x)\right)\,ds\,dx\to 0\end{multline} as $h\to 0$ due to absolute continuity of the Lebesgue integral. This secures the right-continuity of the map  $\mom: [0,T)\to \lks(\Omega)$; the left-continuity is proved in  a similar fashion. \end{remark}

\begin{remark} \label{r:equiv} Definition \ref{d:strongclass} is actually independent of the weight function $\Wei$. Indeed, let $v$ be a strong solution, $\wei_1$ be another positive scalar function bounded away from $0$ and $\infty$, and $\Wei_1=\int_t^T \wei_1(s)\, ds$. Obviously, $\Wei(t) \lesssim  \Wei_1(t)\lesssim \Wei(t)$. Hence, $$\Wei_1 L^* (\mom)\in L^\infty ((0,T)\times \Omega; \R^{\nn}_s),$$ and  $$ \p_t (\Wei_1\mom)=-\wei_1 \mom+ \Wei_1 \p_t (\mom)=-\wei_1 \mom+\frac {\Wei_1} {\Wei}\wei \mom+\frac {\Wei_1} {\Wei} \p_t (\Wei\mom).$$ Since $\lks ((0,T)\times \Omega;\R^n)$ is a linear space invariant w.r.t. multiplication by bounded functions, $$\p_t (\Wei_1\mom)\in \lks ((0,T)\times \Omega;\R^n).$$  
\end{remark}

The following lemma shows that the strong solutions soar above the rest of the weak solutions. 

\begin{lemma} \label{lemm} If \eqref{o:acons} is assumed, then for any strong solution $v$ the total entropy is continuous in time for $t\in [0,T)$ and is conserved, i.e.,  $K(t)=K(0)=\int_\Omega \ce (v_0)\,d x$.  \end{lemma}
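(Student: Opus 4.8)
The plan is to establish the two assertions separately and then combine them. Continuity of $K$ comes cheaply: by Remark~\ref{r:regul1} one has $\mom\in C([0,T);\lks(\Omega;\R^n))$, and by \eqref{e:lham2} the Nemytskii operator $w\mapsto\ce(\nabla\ce^*(w))$ is well defined from $\lks(\Omega;\R^n)$ to $L^1(\Omega)$, hence continuous by Proposition~\ref{p:nemyt}; composing with $t\mapsto\mom(t,\cdot)$ and integrating over $\Omega$ shows $t\mapsto K(t)$ is continuous on $[0,T)$, and since $v(0)=\nabla\ce^*(\mom_0)=v_0$ we get $K(0)=\int_\Omega\ce(v_0)\,dx$. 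The same reasoning applied to $w\mapsto\ce^*(w)$ shows that $K^*(t):=\int_\Omega\ce^*(\mom(t,x))\,dx$ is continuous on $[0,T)$ as well; in particular it is finite for every $t$.

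The second ingredient is a pointwise-in-time conservativity identity: $(\cf(v(t,\cdot)),L^*(\mom(t,\cdot)))=0$ for a.e.\ $t\in(0,T)$. From \eqref{o:strongclass} and \eqref{e:lham} one has, for a.e.\ $t$, $\mom(t,\cdot)\in\lks(\Omega)$ and $L^*\mom(t,\cdot)\in L^\infty(\Omega)$ with values in $\Lam$. Mollifying in $x$ gives smooth $\mom_\varepsilon:=\rho_\varepsilon*\mom(t,\cdot)$, for which \eqref{o:acons} yields $(\cf(\nabla\ce^*(\mom_\varepsilon)),L^*\mom_\varepsilon)=0$. As $\varepsilon\to0$: $\mom_\varepsilon\to\mom(t,\cdot)$ in $\lks(\Omega)$ by Proposition~\ref{p:conv}, so $\lamproj\cf(\nabla\ce^*(\mom_\varepsilon))\to\lamproj\cf(v(t,\cdot))$ in $L^1(\Omega)$ (the map $w\mapsto\lamproj\cf(\nabla\ce^*(w))$ is well defined from $\lks(\Omega)$ to $L^1(\Omega)$ by \eqref{e:lessss0} and \eqref{e:lham2}, hence continuous by Proposition~\ref{p:nemyt}), while $L^*\mom_\varepsilon=\rho_\varepsilon*L^*\mom(t,\cdot)$ stays bounded in $L^\infty$, remains $\Lam$-valued and converges weakly-$*$; since $L^*\mom(t,\cdot)\in\Lam$ one may replace $\cf$ by $\lamproj\cf$ in all these pairings, and passing to the limit gives the identity.

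The heart of the proof is to feed the admissible pair $E:=\p_t(\psi\mom)=\psi'\mom+\psi\,\p_t\mom$, $B:=L^*(\psi\mom)=\psi\,L^*\mom$ into the weak-solution identity \eqref{o:w2}, where $\psi\in C^\infty([0,T))$ satisfies $\psi(0)=1$ and $\supp\psi$ is a compact subset of $[0,T)$. On $\supp\psi$ all the quantities are tame ($\Wei$ is bounded below, $\p_t\mom\in\lks$ by Remark~\ref{r:regul1}, $L^*\mom\in L^\infty$), so $E\in\lks((0,T)\times\Omega)$ and $B\in L^\infty((0,T)\times\Omega)$; a short computation using that $L^*$ acts only in space with constant coefficients, that $\Psi(0)=0$ forces $L\Psi(0)=0$, and that $t\mapsto(\mom(t,\cdot),L\Psi(t,\cdot))$ is locally absolutely continuous verifies the constraint \eqref{o:constrweak}. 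By the previous step the $B$-term of \eqref{o:w2} equals $\int_0^T\psi(t)(\cf(v(t)),L^*\mom(t))\,dt=0$, and the $v_0$-terms integrate to $-(v_0,\mom_0)$ (fundamental theorem of calculus for the $L^1$-valued absolutely continuous map $t\mapsto\psi(t)\mom(t,\cdot)$, whose values at $T$ and $0$ are $0$ and $\mom_0$), whence
\[
\int_0^T\psi'(t)\,(v(t),\mom(t))\,dt+\int_0^T\psi(t)\,(v(t),\p_t\mom(t))\,dt+(v_0,\mom_0)=0.
\]
I then show $K^*\in AC_{loc}([0,T))$ with $\tfrac{d}{dt}K^*=(v,\p_t\mom)$ a.e.: for a.e.\ $x$ the map $s\mapsto\mom(s,x)$ is absolutely continuous (as $\mom\in AC_{loc}([0,T);L^1(\Omega))$), and $\ce^*$ is $C^1$ because $\ce$ is strictly convex, so $s\mapsto\ce^*(\mom(s,x))$ is absolutely continuous with derivative $\nabla\ce^*(\mom(s,x))\cdot\p_s\mom(s,x)=v(s,x)\cdot\p_s\mom(s,x)$; integrating in $x$, with Fubini licensed by the Fenchel--Young bound $|v\cdot\p_s\mom|\le\ce(v)+\ce^*(\p_s\mom)\in L^1_{loc}$, gives the claim. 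Integrating the middle term of the displayed identity by parts against $\psi$, and using $(v(t),\mom(t))-K^*(t)=\int_\Omega[v\cdot\mom-\ce^*(\mom)]\,dx=\int_\Omega\ce(v(t))\,dx=K(t)$ together with $(v_0,\mom_0)-K^*(0)=K(0)$, the identity collapses to
\[
\int_0^T\psi'(t)\,K(t)\,dt=-K(0)\qquad\text{for every admissible }\psi.
\]

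Varying $\psi$ finishes it: subtracting this identity for two admissible cutoffs shows $\int_0^T h(t)K(t)\,dt=0$ for every $h\in C_c^\infty((0,T))$ with $\int_0^T h=0$, so $K$ coincides a.e.\ with a constant, and testing against any single admissible $\psi$ pins that constant down to $K(0)$; by the continuity established above, $K(t)=K(0)=\int_\Omega\ce(v_0)\,dx$ for all $t\in[0,T)$. The main obstacle is that $v$ itself need not be absolutely continuous in time (only $\mom$ is), so $K$ cannot be differentiated directly; the device is to recover the time behaviour of $K$ indirectly through the splitting $K=(v,\mom)-K^*$ and the weak formulation, differentiating only the manifestly good quantity $K^*$. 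A secondary difficulty is that the flux $\cf(v)$ is integrable only after projection onto $\Lam$ and that \eqref{o:acons} is available only for smooth $\mom$; both are dealt with by the spatial mollification of the second step, exploiting that every pairing against $L^*\mom$ sees only $\lamproj\cf(v)$.
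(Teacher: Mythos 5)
Your proof is correct, and it reaches the conclusion by a genuinely different route than the paper's. The paper works entirely with test cutoffs $\varphi$ compactly supported in the \emph{open} interval $(0,T)$, establishes two integral identities (\eqref{e:w3++a}, \eqref{e:w3++b}) by extending $\mom$ by zero to the space--time torus $\qq=2T\mathbb T^1\times\Omega$ and mollifying in \emph{both} $t$ and $x$, and then plugs the resulting $(E,B)$ into \eqref{o:w2} to obtain the distributional conservation law $\int_0^T(\ce(v),\p_t\varphi)\,dt=0$ directly, with the $v_0$-term vanishing automatically from $\varphi(0)=0$. You instead use a cutoff $\psi$ with $\psi(0)=1$, which deliberately keeps the initial trace $(v_0,\mom_0)$ alive; you replace the space--time mollification by two lighter steps: a fixed-time, space-only mollification yielding the stronger, pointwise-in-time conservativity $(\cf(v(t,\cdot)),L^*\mom(t,\cdot))=0$ for a.e.\ $t$; and a direct differentiation of the dual entropy $K^*(t)=\int_\Omega\ce^*(\mom)\,dx$, exploiting that $\mom\in AC_{loc}([0,T);L^1)$ and $\ce^*\in C^1$, together with the Fenchel--Young split $K=(v,\mom)-K^*$. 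This sidesteps the need to manufacture time regularity for $v$ (or $\ce(v)$) itself, which is where the paper's time-mollification and weak-$L_{\ce}$ compactness argument for $v_k\rightharpoonup v$ do the work. The two approaches then reconverge: both yield a distributional conservation statement, after which continuity of $K$ (via Remark~\ref{r:regul1} and Proposition~\ref{p:nemyt}) upgrades a.e.\ constancy to everywhere constancy. What your route buys is a cleaner separation of the analytic difficulties (conservativity at a.e.\ fixed $t$; temporal absolute continuity of the dual entropy), and it makes visible that the Legendre structure $K=(v,\mom)-K^*$ is what is really being differentiated; what it costs is the slightly more delicate bookkeeping of the initial trace and the Fubini/chain-rule justifications for $K^*$, both of which you handle correctly via Fenchel--Young domination.
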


\begin{proof} Let $\varphi(t)$ be an arbitrary smooth compactly supported function on $(0,T)$. Observe that $\frac {\varphi}{\Wei}$ is bounded, Lipschitz and compactly supported on $(0,T)$. Let us prove that \be \label{e:cl11}\int_0^T(\ce(v), \p_t\varphi)\, dt=0.\ee We first claim that \be \label{e:w3++a}\int_0^T(\cf(v), L^*(\varphi\mom))\, dt=0\ee  and \be \label{e:w3++b}\int_0^T \left[(v-v_0,\p_t(\varphi \mom))-(\ce(\nabla \ce^*(\mom)), \p_t \varphi)\right]\, dt=0.\ee Taking \eqref{e:w3++a} and \eqref{e:w3++b} for granted, and employing  \eqref{o:w2} with $$B=L^*(\varphi \mom)=\frac {\varphi}{\Wei}  \Wei L^*(\mom)\in  L^\infty ((0,T)\times \Omega;\R^n),$$ $$ E=\p_t(\varphi\mom)=\p_t( \frac {\varphi}{\Wei})\Wei\mom+\p_t (\Wei\mom) \frac {\varphi}{\Wei}\in  \lks ((0,T)\times \Omega;\R^n),$$ we infer $$\int_0^T (v-v_0,\p_t(\varphi\mom))\, dt=0$$ and, consequently, $$\int_0^T(\ce(\nabla\ce^*(\mom)), \p_t\varphi)\, dt=0.$$ 
	
	We have proved \eqref{e:cl11}, i.e., that the total entropy is conserved in the sense of distributions. In order to prove that it is conserved in the classical sense, we just need to establish the continuity of the total entropy w.r.t. time. By Remark \ref{r:regul1}, $\mom \in C([0,T);\lks(\Omega; \R^n))$. Proposition \ref{p:nemyt} and \eqref{e:lham2} imply that the Nemytskii operator $$\mathcal N_1: w \mapsto \ce(\nabla \ce^*(w))$$ is continuous from $\lks (\Omega;\R^n)$ to $L^1(\Omega)$. Consequently, $\ce(v) \in C([0,T);L^1(\Omega))$, whence $K(t)\in C([0,T))$. 
	
	It remains to prove \eqref{e:w3++a} and \eqref{e:w3++b}. Without loss of generality (continuing by zero outside $[0,T]$) we may assume that $\mom$ and $\varphi$ are defined on the torus $\qq:=2T\mathbb T^1\times \Omega$. Consider the mollifications $\mom_k:=\rho_k * \mom$, where $\rho_k$ are as in Section \ref{orl}. 
	
We now prove \eqref{e:w3++b}. Due to the presence of the cut-off function $\varphi$, the behaviour of $\mom$ for small $T-t$ does not affect the validity of \eqref{e:w3++b}, and thus without loss of generality we may assume that $$\p_t \mom=\frac 1 \Wei \p_t (\Wei\mom)+\frac \wei \Wei \mom\in \lks (\qq;\R^n).$$ Then\be  \label{e:lpcon}  \mom_k\to \mom,\ \p_t \mom_k \to \p_t \mom, \ \p_t (\varphi\mom_k) \to \p_t (\varphi\mom) \ee strongly in $\lks (\qq;\R^n)$ as $k\to +\infty$. Set $v_k:=\nabla \ce^*(\mom_k)$.  Observe now that \be \label{f223}\int_0^T \left[(v_k,\p_t(\varphi \mom_k))-(\ce(\nabla \ce^*(\mom_k)), \p_t \varphi)\right]\, dt=0.\ee Indeed, integration by parts gives \begin{multline} \int_0^T \left[(v_k,\p_t(\varphi \mom_k))-(\ce(v_k), \p_t \varphi)\right]\, dt\\=- \int_0^T \left[(\p_t v_k,\varphi \mom_k)-(\nabla\ce(v_k)\cdot \p_t v_k,  \varphi)\right]\, dt\\= - \int_0^T \left[(\p_t v_k,\varphi \mom_k)-(\mom_k\cdot \p_t v_k,  \varphi)\right]\, dt= 0.\end{multline} 

By \eqref{e:lham2}, \be \label{e:hamst}\int_\qq \ce (\nabla \ce^*(w))\lesssim \int_\qq \ce^* (w)+1.\ee Proposition \ref{p:nemyt} implies that the Nemytskii operator $$\mathcal N_1: w \mapsto \ce(\nabla \ce^*(w))$$ is continuous from $\lks (\qq;\R^n)$ to $L^1(\qq)$. Hence, we can pass to the limit in the second term of \eqref{f223}.  

On the other hand,  it follows from \eqref{e:lpcon} and \eqref{e:hamst} that the sequence $v_k=\nabla \ce^*(\mom_k)$ is bounded in $\lk(\qq;\R^n)$ and a.e. converges to $v$. Passing to a subsequence if necessary, we infer $v_k\to v$ weakly in $\lk(\qq;\R^n)$. We can now legitimately pass to the limit in the first term of \eqref{f223}.  

Moreover,
\be \label{e:w3++d}\int_0^T (v_0,\p_t(\varphi \mom_k))\, dt=0\ee since $\varphi$ is compactly supported in $(0,T)$ and $v_0$ does not depend on $t$.  Since $v_0\in \lk (\Omega;\R^n)\subset \lk (\qq;\R^n)$, we can pass to the limit in \eqref{e:w3++d} as well. 

As a result, \eqref{f223} and \eqref{e:w3++d} imply \eqref{e:w3++b}.
	
	 Let us prove \eqref{e:w3++a}. Due to the presence of $\varphi$, the behaviour of $\mom$ for small $T-t$ is again not relevant, and we may assume that $$L^* (\mom)\in L^\infty (\qq; \R^{\nn}_s).$$ Passing to a subsequence if necessary, we get \be  \label{e:lpcon1} L^* (\mom_k) \to L^* (\mom)\ee weakly-$*$ in $L^\infty (\qq; \R^{\nn}_s)$ as $k\to +\infty$. On the other hand, \eqref{o:acons} yields \be \label{f228} \int_0^T(\cf(\nabla \ce^*(\mom_k)), L^*(\varphi\mom_k))\, dt=\int_0^T(\cf(\nabla \ce^*(\mom_k)), L^*(\mom_k))\varphi(t)\, dt=0.\ee  Estimates \eqref{e:lessss0}, \eqref{e:hamst} and Proposition \ref{p:nemyt} imply that the Nemytskii operator $$\mathcal N_2: w \mapsto \cf(\nabla \ce^*(w))$$ is continuous from $\lks (\qq;\R^n)$ to $L^1(\qq;\R^{\nn}_s)$. Hence, $$\cf(v_k)=\cf(\nabla \ce^*(\mom_k))\to\cf(\nabla \ce^*(\mom))=\cf(v)$$ strongly in $L^1(\qq;\R^{\nn}_s)$, and passing to the limit in \eqref{f228} we obtain \eqref{e:w3++a}.
\end{proof}

The suggested approach of selecting the weak solutions that minimize $\int_0^T \wei(t) K(t)\, dt$ --- or, equivalently, $ \frac 1 2  \int_{(0,T)\times \Omega} \wei(t) \tr \cf(t)\,dx\,dt$ --- can be implemented via the saddle-point problem \be\label{o:sadd1}\mathcal I(v_0, T)=\inf_{v}\sup_{E,B:\,\eqref{o:constrweak}}\int_0^T \left[(v-v_0,E)+\frac 1 2(\cf(v), \wei I+2B)\right]\,dt.\ee The infimum in \eqref{o:sadd1} is taken over all $v\in \lk((0,T)\times \Omega;\R^n)$, and the supremum is taken over all pairs $(E,B)$ satisfying \eqref{o:be} and the linear constraint \eqref{o:constrweak}.

The dual problem is \be\label{o:sadd2}\mathcal J(v_0, T)=\sup_{E,B:\,\eqref{o:constrweak}}\inf_{v}\int_0^T \left[(v-v_0,E)+\frac 1 2(\cf(v), \wei I+2B)\right]\,dt,\ee where $v,E,B$ are varying in the same function spaces as above. 

It is also relevant to consider the corresponding problems for the subsolutions: \be\label{o:sadd1sub}\tilde{\mathcal I}(v_0, T)=\inf_{v,M:\cf(v)\leq M}\sup_{E,B:\,\eqref{o:constrweak}}\int_0^T \left[(v-v_0,E)+\frac 1 2(M, \wei I+2B)\right]\,dt.\ee The infimum in \eqref{o:sadd1} is taken over all $v\in \lk((0,T)\times \Omega;\R^n)$ and $M\in L^1((0,T)\times \Omega;\Mm)$, and the supremum is taken over all pairs $(E,B)$ satisfying \eqref{o:be} and the linear constraint \eqref{o:constrweak}.
The problem dual to \eqref{o:sadd1sub} is \be\label{o:sadd2sub}\tilde{\mathcal J}(v_0, T)=\sup_{E,B:\,\eqref{o:constrweak}}\inf_{v,M:\cf(v)\leq M}\int_0^T \left[(v-v_0,E)+\frac 1 2(M, \wei I+2B)\right]\,dt,\ee where $v,M,E,B$ are varying in the same function spaces as above. 

\begin{remark}[Simple observations about the optimal values] Denote \be K_0:=\frac 1 2 \int_\Omega \tr \cf(v_0)\, dx=K(0)+\frac 1 2 \int_\Omega \tr \cf(0)\, dx.\ee Since $\inf\sup\geq \sup\inf$, one has $$\mathcal I(v_0, T)\ge \tilde{\mathcal I}(v_0, T)\ge \tilde{\mathcal J}(v_0, T), \ \mathcal I(v_0, T)\ge \mathcal J(v_0, T)\ge \tilde{\mathcal J}(v_0, T).$$ Note that the $\sup$ in \eqref{o:sadd1} is always $+\infty$ if $v$ is not a weak solution, whence $\mathcal I(v_0,T)=+\infty$ if there are no weak solutions. On the other hand, if \eqref{o:acons} holds, and there exists a strong solution $v$, then the corresponding $\sup$ in \eqref{o:sadd1} is equal to \begin{multline*}\frac 12\int_0^T (\cf(v), \wei I)\,dt=\int_0^T \wei \left[K(t)+\frac 1 2 \int_\Omega \tr \cf(0)\, dx\right]\,dt\\=\int_0^T \wei \left[K(0)+\frac 1 2 \int_\Omega \tr \cf(0)\, dx\right]\,dt= \int_0^T \wei K_0\,dt=\Wei(0)K_0,\end{multline*} which yields $\mathcal I(v_0,T)\leq \Wei(0)K_0$. Finally, testing by $(E,B)=(0,0)$ we see that \begin{multline*}\tilde{\mathcal J}(v_0, T)\geq \frac 1 2 \inf_{v,M:\cf(v)\leq M}\int_0^T (M, \wei I)\,dt\geq \\ \inf_{v,M:\cf(v)\leq M}\int_{(0,T)\times\Omega} \wei \left(\ce(v)+\frac 1 2 \tr \cf(0) \right)\,dx\,dt\ge \frac {\Wei(0)|\Omega| \tr \cf(0)} 2  \geq 0.\end{multline*} \label{orweak}\end{remark} 

It is easy to see (taking into account that  $M+A\geq M$ for any $A\geql 0$) that if $\wei I+2B$ is not non-negative-definite on a set of positive Lebesgue measure in $(0,T)\times \Omega$, then the $\inf$ in \eqref{o:sadd2sub} equals $-\infty$. Hence,  any solution to \eqref{o:sadd2sub}  necessarily satisfies \be\label{o:bwe} \wei I+2B\geql 0\ \mathrm{a.e.}\ \mathrm{in}\ (0,T)\times \Omega.\ee
Consider the nonlinear functional $$\mathcal K: \lks((0,T)\times \Omega;\R^n)\times L^\infty((0,T)\times \Omega;\R^{\nn}_s)\to \R$$ defined by the formula
\be\label{o:defk1} \mathcal K(E,B)=\inf_{\cf(z)\le M}\int_0^T \left[(z,E)+\frac 1 2(M, \wei I+2B)\right]\,dt,\ee  where the infimum is taken over all pairs $(z,M)\in \lk((0,T)\times \Omega;\R^n)\times L^1((0,T)\times \Omega;\R^{\nn}_s)$. 


Then \eqref{o:sadd2sub} is equivalent to \be\label{o:conc}\tilde{\mathcal J}(v_0, T)=\sup_{E,B:\,\eqref{o:constrweak},\eqref{o:bwe}}\int_0^T -(v_0,E)\,dt+\bigk,\ee the supremum is taken over all pairs $(E,B)$ belonging to the class \eqref{o:be}.

\begin{remark}[A variant of our theory based on the relative entropy] \label{remach} Inspired by \cite{Ach7}, is possible to develop a variant of our theory for which the primal problem consists in minimizing a suitable time integral of the \emph{relative entropy} (also known as the Bregman divergence) \begin{equation} \label{o:consprem} \cb(t):=\int_\Omega \left[\ce (v(t,x))-\ce (\tilde v(t,x))-\tilde v^{\#}(t,x)\cdot (v(t,x)-\tilde v(t,x))\right]\,d x.\end{equation} \label{r:zar} Here $$\tilde v:[0,T]\times \Omega\to \R^n$$ is a fixed ``base state''. (Since $\ce(0)=0$ and $0^{\#}=0$, the setting that we have adopted in this paper corresponds to $\tilde v \equiv 0$.) The implementation of this idea lies beyond the scope of this paper, but it is very plausible that an existence theorem (similar to Theorem \ref{t:ex}) and a global in time consistency result similar to \cite[Theorem 3.6]{Ach7}, cf. Remark \ref{remcons}, should hold for the corresponing dual problem. An interesting task would be to obtain consistency results in which $\tilde v$ would neither be identically zero (as in Theorem \ref{o:smooth}) nor coincide with the strong solution (as in \cite[Theorem 3.6]{Ach7}).  \end{remark} 

\section{Consistency and Dafermos' principle} \label{Secx}

The following theorem shows that a strong solution determines a solution to the optimization problem \eqref{o:conc}, and vice versa.  This advocates the possibility to view the maximizers of \eqref{o:conc} as ``dual variational solutions`'' to \eqref{o:aeuler}, cf. \cite{CMP18,V22,Ach7}, see also Remark \ref{r:time}.

\begin{theorem}[Consistency] \label{o:smooth} Let $v_0\in \lk(\Omega;\R^n)$.  Let $v$ be a strong solution to \eqref{o:aeuler}  satisfying \be\label{o:pd++} \wei I  \geql  -2\Wei (t) L^*(\mom)\ \textrm{a.e.}\  \textrm{in}\ (0,T)\times \Omega.\ee  Then $\mathcal I(v_0,T)=\tilde{\mathcal J}(v_0,T)=\tilde{\mathcal I}(v_0,T)=\mathcal J(v_0,T)=\Wei(0)K_0$. The pair $(E_+,B_+)$ defined by \be B_+=L^*a,\, E_+=\p_t a, \label{e:beee}\ee where \be \label{axi} a=\Wei \mom,\,\ee belongs to the class \eqref{o:be} and maximizes \eqref{o:conc}.  Moreover, one can invert these formulas and express $v$ in terms of $E_+$ as follows
	 \be \label{o:ta1} v(t,x)=\nabla \ce ^*\left(\frac 1 {\Wei(t)} \int_{t}^T (-E_+)(s,x)\,ds\right),\quad t< T.\ee

\end{theorem}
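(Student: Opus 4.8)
The plan is to sandwich all four optimal values between $\Wei(0)K_0$ from above and below. From Remark \ref{orweak} we already have $\mathcal I(v_0,T)\le\Wei(0)K_0$ --- this uses \eqref{o:acons} and, crucially, Lemma \ref{lemm}, i.e.\ conservation of the total entropy along the strong solution $v$ --- together with $\mathcal I(v_0,T)\ge\tilde{\mathcal I}(v_0,T)\ge\tilde{\mathcal J}(v_0,T)$ and $\mathcal I(v_0,T)\ge\mathcal J(v_0,T)\ge\tilde{\mathcal J}(v_0,T)$. Hence everything reduces to exhibiting an admissible competitor for the concave problem \eqref{o:conc} whose value is at least $\Wei(0)K_0$; this will simultaneously equalize the four values and certify the maximizer. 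The natural candidate is $(E_+,B_+)$ with $a=\Wei\mom$.

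First I would check admissibility of $(E_+,B_+)$. Since $\Wei$ depends on $t$ only while $L^*$ acts in $x$, $B_+=L^*(\Wei\mom)=\Wei L^*(\mom)\in L^\infty((0,T)\times\Omega;\R^{\nn}_s)$ and $E_+=\p_t(\Wei\mom)\in\lks((0,T)\times\Omega;\R^n)$ by the regularity \eqref{o:strongclass} built into the notion of strong solution, so $(E_+,B_+)$ lies in the class \eqref{o:be}. Because $a=\Wei\mom\in AC([0,T];L^1)$ with $\Wei(T)=0$ and $\mom\in L^1$ (Remark \ref{r:regul1}), one has $a(T)=0$, so integrating by parts in time in $\int_0^T[(B_+,\p_t\Psi)+(E_+,L\Psi)]\,dt=\int_0^T\p_t(a,L\Psi)\,dt=(a,L\Psi)|_0^T$ gives $0$, i.e.\ the weak constraint \eqref{o:constrweak}. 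The pointwise bound \eqref{o:pd++} is exactly $P:=\wei I+2B_+\geql0$ a.e., which in particular yields \eqref{o:bwe}; moreover $P$ is $\Lam$-valued a.e., since $I\in\Lam$ and $L^*(\mom)\in\Lam$ a.e.\ (mollify $\mom$, use that $L^*$ commutes with convolution and $\Lam$ is closed). Thus $P\in\Lam\cap\Mmm$ a.e.

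The conceptual heart is the evaluation $\bigkp=g(v)$, where $g(z):=\int_0^T[(z,E_+)+\tfrac12(\cf(z),P)]\,dt$, obtained by minimizing \eqref{o:defk1} in two stages. For fixed $z$, admissibility means $M-\cf(z)\in\Mm$ a.e.; since $P\in\Lam\cap\Mmm$ a.e., the definition of $\Mm$ gives $(M-\cf(z)):P\ge0$ a.e., so the infimum over $M$ is attained at $M=\cf(z)$ and equals $g(z)$. Next, by the $\Lam$-convexity of $\cf$ (Assumption \ref{convf}) the map $w\mapsto\cf(w):P(t,x)$ is convex and $C^1$ for a.e.\ $(t,x)$, so the subgradient inequality yields $g(z)-g(v)\ge\int_0^T\sum_{l}(z_l-v_l,\,(E_+)_l+\tfrac12\,\p_l\cf(v):P)\,dt$ --- all pairings being integrable by \eqref{e:lessss0}, \eqref{e:lessss} and Orlicz--Hölder. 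I would then show that the bracket vanishes a.e.: writing $(E_+)_l=\p_t(\Wei\mom)_l=-\wei(\mom)_l+\Wei\,\p_t(\mom)_l$ and using $\tfrac12\,\p_l\cf(v):I=\tfrac12\tr\p_l\cf(v)=\p_l\ce(v)=(\mom)_l$ together with the symmetry of the Frobenius product, the $\wei(\mom)_l$ terms cancel and what remains is $\Wei$ times the left-hand side of the strong-solution equation \eqref{o:aeuler22}, hence $0$. Therefore $\inf_z g(z)=g(v)$, and since $(v,\cf(v))$ is itself admissible in \eqref{o:defk1} (using $\cf(v)\in L^1$, cf.\ the remark after the definition of weak solutions) we conclude $\bigkp=g(v)$.

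Finally I would assemble. The value of \eqref{o:conc} at $(E_+,B_+)$ is $-\int_0^T(v_0,E_+)\,dt+g(v)=\int_0^T[(v-v_0,E_+)+(\cf(v),B_+)]\,dt+\tfrac12\int_0^T(\cf(v),\wei I)\,dt$; the first integral vanishes because $v$ is a weak solution tested against the admissible pair $(E_+,B_+)$ (formula \eqref{o:w2}), while the second equals $\int_0^T\wei[K(t)+\tfrac12\int_\Omega\tr\cf(0)\,dx]\,dt=\int_0^T\wei K_0\,dt=\Wei(0)K_0$ by Lemma \ref{lemm}. So $\tilde{\mathcal J}(v_0,T)\ge\Wei(0)K_0$, the sandwich closes, and $\mathcal I=\tilde{\mathcal I}=\mathcal J=\tilde{\mathcal J}=\Wei(0)K_0$ with $(E_+,B_+)$ a maximizer (its value equals the supremum). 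The inversion formula \eqref{o:ta1} follows from $a(t,x)=a(T,x)-\int_t^T\p_t a=\int_t^T(-E_+)(s,x)\,ds$ together with $\mom=a/\Wei$ and $v=\nabla\ce^*(\mom)$. The main obstacle, beyond the algebraic cancellation (which is clean and rests entirely on \eqref{o:aeuler22}), is to make the two-stage minimization rigorous in the anisotropic Orlicz framework --- integrability of every pairing, admissibility of the minimizer $(v,\cf(v))$, closedness of the relevant classes --- and to control the boundary behavior at $t=T$ that is needed for $a(T)=0$.
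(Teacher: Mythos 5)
Your proof is correct and follows essentially the same route as the paper's: check admissibility of $(E_+,B_+)$, prove the key pointwise identity $(E_+)_l+\tfrac12(\wei I+2B_+):\partial_l\cf(v)=0$ (the paper's \eqref{o:vret1}), invoke $\Lam$-convexity to control the infimum in $\bigkp$, use the weak formulation tested against $(E_+,B_+)$ together with Lemma \ref{lemm}, and close the $\inf\sup\geq\sup\inf$ sandwich with the upper bound from Remark \ref{orweak}. The only organizational difference is that you carry out the minimization in \eqref{o:defk1} in two explicit stages (first over $M$ at $M=\cf(z)$, then over $z$ at $z=v$), obtaining the exact identity $\bigkp=g(v)$, whereas the paper substitutes \eqref{o:vret1} into the integrand and runs one chain of inequalities to get the needed lower bound; these are equivalent. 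Your additional care in verifying that $P=\wei I+2B_+$ is $\Lam$-valued a.e.\ (via mollification of $\mom$) and that $a(T)=0$ are points the paper leaves implicit but which are needed, so that bookkeeping is a worthwhile addition rather than a deviation.
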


\begin{proof} 
	By construction, the pair $(E_+,B_+)$ belongs to $\lks((0,T)\times \Omega;\R^n)\times L^\infty((0,T)\times \Omega;\R^{\nn}_s)$. It follows from $\Wei(T)=0$ that this pair verifies  \eqref{o:constrweak}. Moreover, \eqref{o:pd++} implies \eqref{o:bwe} for $B_+$. We now claim that \be \label{o:vret1} \frac 1 2 (\wei I+2B_+):\partial_l\cf(v)+(E_+)_l=0.\ee  Indeed, using \eqref{o:aeuler2} we compute \begin{multline}\frac 1 2 (\wei I+2B_+):\partial_l\cf(v)+(E_+)_l\\=\frac 1 2 (\wei I+2\Wei L^*(\mom)):\partial_l\cf(v)+(-\wei (\mom)_l+ \Wei \p_t (\mom)_l)\\ =\wei \partial_l\ce(v) +\Wei L^*(\mom):\partial_l\cf(v)+(-\wei (\mom)_l+ \Wei \p_t (\mom)_l) \\= \Wei( L^*(\mom:\partial_l\cf(v)+ \p_t (\mom)_l)
		=0.\end{multline}
	
	On the other hand, since $v$ is in particular a weak solution, it satisfies \eqref{o:w2} with test functions $(E_+,B_+)$. Thus we have \be \label{o:w2+}\int_0^T \left[(v-v_0,E_+)+(\cf(v), B_+)\right]\, dt=0.\ee Hence, by \eqref{o:vret1}, \be \label{f45} \int_0^T \left[-(v_0,E_+)+(\cf(v), B_+)\right]\, dt=\frac 1 2 \sum_{l=1}^n \int_0^T (v_l(\wei I+2B_+),\partial_l\cf(v))\, dt.\ee Employing \eqref{f45} and Lemma \ref{lemm}, we obtain \begin{multline} \label{o:w3+}\frac 1 2 \int_0^T (\cf(v),\wei I+2B_+)\, dt -\frac 1 2 \sum_{l=1}^n \int_0^T (v_l\partial_l\cf(v),\wei I+2B_+)\, dt \\= \int_0^T \left[(v_0,E_+)+\wei(t)\left(\ce (v(t))+\frac 1 2 \tr \cf(0), 1\right)\right]\, dt\\= \int_0^T \left[(v_0,E_+)+\wei(t)\left(\ce (v_0)+\frac 1 2 \tr \cf(0), 1\right)\right]\, dt\\=\int_0^T (v_0,E_+)\, dt+ \Wei(0)K_0.\end{multline}

	By Remark \ref{orweak}, we have $\mathcal I(v_0, T)\leq \Wei(0)K_0$. Hence, it suffices to show that \be \label{o:claim1} \int_0^T -(v_0,E_+)\, dt+\bigkp\ge \Wei(0)K_0, \ee so that there is no duality gap.
	
	The $\Lam$-convexity of $\cf$ implies that the function $y\mapsto \frac 1 2 \cf(y):(\wei I+2B_+(t,x))$ is convex w.r.t. $y\in \R^n$ for a.e. fixed pair of parameters $(t,x)\in (0,T)\times \Omega$.
	 Now \eqref{o:vret1}, \eqref{o:w3+} and \eqref{o:bwe} for $B_+$ yield \begin{multline*} \int_0^T -(v_0,E_+)\, dt+\bigkp\\=\int_0^T -(v_0,E_+)\, dt\\+\inf_{\cf(z)\le M}\int_0^T \left[-\frac 1 2\sum_{l=1}^n\left(z_l \partial_l\cf(v),\wei I+2B_+\right)+\frac 1 2(M, \wei I+2B_+)\right]\, dt\\ \ge \int_0^T -(v_0,E_+)\, dt\\+\inf_{z\in \lk((0,T)\times \Omega;\R^n)}\int_0^T \left[-\frac 1 2\sum_{l=1}^n\left(z_l \partial_l\cf(v),\wei I+2B_+\right)+\frac 1 2(\cf(z), \wei I+2B_+)\right]\, dt\\
		\\=\int_0^T -(v_0,E_+)\, dt\\+\int_0^T \left[-\frac 1 2\sum_{l=1}^n\left(v_l \partial_l\cf(v),\wei I+2B_+\right)+\frac 1 2(\cf(v), \wei I+2B_+)\right]\, dt\\=\Wei(0)K_0. \end{multline*} The penultimate equality follows from the well-known fact that, given a convex function $\Phi:\R^n \to \R$, the function $y\mapsto \Phi (y)-y\cdot \nabla \Phi (\xi)$ attains its minimum at $y=\xi$. 
	
	Finally, since $\Wei(T)=0$,  we deduce from \eqref{e:beee} and \eqref{axi} that \be\int_t^T E_+(s,x)\,ds=\int_t^T \p_s(\Wei(s)\mom(s,x))\, ds=-\Wei(t)\mom(t,x),\ee which yields \eqref{o:ta1}. 
	\end{proof}

\begin{remark}[Limitation \eqref{o:pd++} can be overcome by adapting the weight $\wei$] \label{r:timeei} At first glance, condition \eqref{o:pd++} indicates that the interval for the which the consistency holds can be smaller than the interval $[0,T)$ on which the strong solution\footnote{We say ``the strong solution" because strong solutions are unique, see Theorem \ref{l.uniques}.} exists. However, the weight $\wei$ can be selected to guarantee the consistency on any interval $[0,T_1]$, $T_1<T$. Indeed, by Remark \ref{r:equiv} with $\wei_1
	\equiv 1$,  for any strong solution on $[0,T)$ we have that $(T-t)L^*(\mom(t,x))$ is essentially bounded. Therefore, for any $T_1<T$ there exists $\gamma>0$ such that \be \label{e:weiwei} \gamma I \geql  -2 L^*(\mom)\ee a.e. on $(0,T_1)\times \Omega$. It is easy to see from Definition \ref{d:strongclass} that $v$ (more accurately, the restriction $v|_{[0,T_1]\times \Omega}$) is a strong solution on  $[0,T_1]$. Consider the weight $\wei(t):=\exp(-\gamma t)$ and let $\Wei(t):=\int_t^{T_1} \wei(s)\, ds\geq 0$ (note that we are now working on the interval $[0,T_1]$). Observe that $$\gamma\Wei\leq \wei.$$ Because of \eqref{e:weiwei}, this implies that \eqref{o:pd++} holds on $(0,T_1)\times \Omega$.  \end{remark}

As an application of Theorem \ref{o:smooth} we establish a variant of Dafermos' principle for \eqref{o:aeuler} affirming that a strong solution dissipates the total entropy not slower and not later than any other subsolution.

\begin{theorem}[Dafermos' principle] \label{t:daf} Let $v_0\in \lk(\Omega;\R^n)$ . Let $v$ be a strong solution to \eqref{o:aeuler} on the interval $[0,T]$ with total entropy $K(t)= K(0)=\int_\Omega \ce (v_0)\,d x$, and let $(u,M)$ be a subsolution on  $[0,T]$ with total entropy $\tilde K(t)=\frac 1 2 (M(t,\cdot)-\cf(0),I)$.  Then for any $0\leq t_0<t_1\leq T$ it cannot simultaneously be that $\tilde K(t)\le K(t)$ for a.a. $t\in (0,t_1)$ and $\tilde K(t)<K(t)$ for a.a. $t\in(t_0,t_1)$. In particular, it is impossible that $\tilde K(t)<K(t)$ for a.a. $t\in(0,\epsilon)$, $\epsilon>0$. 
\end{theorem}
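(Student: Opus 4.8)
The plan is to reduce everything to the consistency Theorem~\ref{o:smooth} together with the weight adaptation of Remark~\ref{r:timeei}, and to argue by contradiction. Suppose there are $0\le t_0<t_1\le T$ such that $\tilde K(t)\le K(t)$ for a.a. $t\in(0,t_1)$ and $\tilde K(t)<K(t)$ for a.a. $t\in(t_0,t_1)$. Fix any $T_1$ with $t_0<T_1<t_1$, so in particular $T_1<t_1\le T$. By Remark~\ref{r:timeei} there is $\gamma>0$ for which, setting $\wei(t):=\exp(-\gamma t)$ and $\Wei(t):=\int_t^{T_1}\wei(s)\,ds$, the restriction $v|_{[0,T_1]\times\Omega}$ is a strong solution on $[0,T_1]$ satisfying \eqref{o:pd++} there. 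Hence Theorem~\ref{o:smooth}, applied on $[0,T_1]$, gives
\[\tilde{\mathcal I}(v_0,T_1)=\Wei(0)K_0,\qquad K_0=K(0)+\tfrac12\int_\Omega\tr\cf(0)\,dx .\]

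Next I would test the infimum defining $\tilde{\mathcal I}(v_0,T_1)$ against the competitor $(u,M)$. One first checks that $(u,M)|_{[0,T_1]\times\Omega}$ is a subsolution on $[0,T_1]$: given a pair $(E,B)$ on $[0,T_1]$ obeying \eqref{o:constrweak}, extend it by zero to $[T_1,T]$ --- this respects \eqref{o:constrweak} on $[0,T]$ precisely because the weak constraint encodes $B(T_1,\cdot)=0$, so no jump is created --- and insert the extension into \eqref{o:w2sub} for $(u,M)$ on $[0,T]$; this yields $\int_0^{T_1}[(u-v_0,E)+(M,B)]\,dt=0$. Consequently the entire $(E,B)$-dependent part of the inner supremum of $\tilde{\mathcal I}(v_0,T_1)$ vanishes, leaving
\[\sup_{E,B:\,\eqref{o:constrweak}}\int_0^{T_1}\Big[(u-v_0,E)+\tfrac12(M,\wei I+2B)\Big]\,dt=\tfrac12\int_0^{T_1}\wei(t)(M(t,\cdot),I)\,dt ,\]
which is finite since $M\in L^1$. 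Therefore $\tilde{\mathcal I}(v_0,T_1)\le\tfrac12\int_0^{T_1}\wei(M,I)\,dt=\int_0^{T_1}\wei(t)\tilde K(t)\,dt+\tfrac12\Wei(0)\int_\Omega\tr\cf(0)\,dx$.

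Comparing this with the equality $\tilde{\mathcal I}(v_0,T_1)=\Wei(0)K_0$ and cancelling the common term $\tfrac12\Wei(0)\int_\Omega\tr\cf(0)\,dx$, I obtain $\Wei(0)K(0)\le\int_0^{T_1}\wei(t)\tilde K(t)\,dt$; since $K\equiv K(0)$ one has $\Wei(0)K(0)=\int_0^{T_1}\wei(t)K(t)\,dt$, hence
\[\int_0^{T_1}\wei(t)\big(\tilde K(t)-K(t)\big)\,dt\ge 0 .\]
But $(0,T_1)\subseteq(0,t_1)$ forces $\tilde K-K\le0$ a.e. on $(0,T_1)$, while $(t_0,T_1)\subseteq(t_0,t_1)$ is a set of positive measure on which $\tilde K-K<0$ a.e.; since $\wei>0$, splitting the last integral at $t_0$ makes its value strictly negative, a contradiction. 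The ``in particular'' assertion is the special case $t_0=0$, $t_1=\epsilon$.

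As to difficulty: there is no genuinely hard step, the proof being a repackaging of Theorem~\ref{o:smooth}. The two points deserving care are (i) the verification that a subsolution restricts to a subsolution on $[0,T_1]$, which rests on the zero-extension being compatible with \eqref{o:constrweak}, and (ii) ensuring that the weight supplied by Remark~\ref{r:timeei} genuinely activates the full equality chain $\mathcal I=\tilde{\mathcal J}=\tilde{\mathcal I}=\mathcal J=\Wei(0)K_0$ on $[0,T_1]$; everything else is bookkeeping with the constants relating $K_0$, $K(t)$ and $\tilde K(t)$.
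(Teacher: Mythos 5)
Your proof is correct, and it is in fact a streamlining of the paper's argument. Both proofs reduce the statement to the consistency Theorem~\ref{o:smooth} on a truncated interval $[0,T_1]$ via Remark~\ref{r:timeei}, and both test the primal functional against the subsolution $(u,M)$ to get a sign contradiction with $\tilde{\mathcal I}(v_0,T_1)=\Wei(0)K_0$. The genuine difference is the choice of $T_1$: the paper (after reducing to $t_1<T$) fixes $T_1\in(t_1,T)$, which creates an extra stretch $(t_1,T_1)$ on which the hypotheses say nothing about the sign of $\tilde K-K$. To tame that extra stretch the paper introduces $\epsilon:=\int_{t_1}^{T_1}\wei(\tilde K-K)$, picks a further interior point $t_2\in(t_0,t_1)$, and then has to choose $\gamma$ enormously large so that the exponential weight suppresses $(t_1,T_1)$; the verification of \eqref{odgoal5} takes several lines of estimates. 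You instead place $T_1\in(t_0,t_1)$ so that the entire working interval $(0,T_1)$ is covered by the hypothesis $\tilde K\le K$, and $(t_0,T_1)$ by the strict inequality; after the duality step the sign contradiction is immediate, and $\gamma$ needs only to be large enough for Remark~\ref{r:timeei} — no further tuning. This buys a markedly shorter and more transparent argument; the one thing you add in exchange is a short justification that the restriction of a subsolution to $(0,T_1)$ is a subsolution there (via zero extension of admissible test pairs $(E,B)$), a fact the paper merely asserts as ``easy to conclude from Definition~\ref{d:ss}''. Your extension argument is sound, since the constraint \eqref{o:constrweak} on $(0,T_1)$ encodes $B(T_1)=0$ and $\partial_tB=L^*E$, so the zero extension matches continuously at $T_1$ and trivially satisfies the constraint on $(T_1,T)$.
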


\begin{proof}  We just prove the first claim, to obtain the second one it suffices to put $t_0=0$ and $t_1=\epsilon$.
	
	 Assume that a subsolution $(u,M)$ satisfies $\tilde K(t)\leq K(0)$ for a.a. $t\in(0,t_1)$ and $\tilde K(t)<K(0)$ for a.a. $t\in(t_0,t_1)$ for some $0\leq t_0<t_1\leq T$. W.l.o.g. $t_1<T$. Fix any $T_1\in (t_1,T)$. Let $\gamma$ be a sufficiently large number that satisfies \eqref{e:weiwei} a.e. on $(0,T_1)\times \Omega$ and other lower bounds to be determined below. As in Remark \ref{r:timeei}, we set $\wei(t):=\exp(-\gamma t)$ and $\Wei(t):=\int_t^{T_1} \wei(s)\, ds$, and infer that \eqref{o:pd++} holds on $(0,T_1)\times \Omega$.

 We now claim that \be \label{odgoal5} \int_{t_0}^{T_1} \wei(t) (\tilde K(t)-K(t))\, dt< 0,\ee which implies that \be \label{odgoal2} \int_0^{T_1} \wei(t) (\tilde K(t)-K(t))\, dt< 0.\ee  Since $v$ is a strong solution, its total entropy $K(t)=K(0)$, so \eqref{odgoal2} is equivalent to  $$ \int_0^{T_1} \wei(t) \tilde K(t)\, dt<\int_0^{T_1} \wei(t) K(0)\, dt=\Wei(0)K(0).$$ Hence, \be \label{odgoal}\frac 1 2 \int_0^{T_1} \wei(t)(M(t,\cdot),I)\, dt\\ <\frac 1 2 \int_0^{T_1}\wei(t) (\cf(0),I)\, dt+ \Wei(0)K(0)=\Wei(0)K_0.\ee
It follows from Theorem \ref{o:smooth} that \be \label{e:leve} \tilde{\mathcal I}(v_0,T_1)=\Wei(0)K_0.\ee It is easy to conclude from Definition \ref{d:ss} that the restriction $(u,M)|_{(0,T_1)\times \Omega}$ is a subsolution on  $(0,T_1)$. Thus in order to get a contradiction it is enough to put together \eqref{o:sadd1sub} with $T=T_1$, \eqref{odgoal} and \eqref{e:leve}.

	It remains to prove \eqref{odgoal5}. 
	Let $$\epsilon:=\int_{t_1}^{T_1} \wei(t) (\tilde K(t)-K(t))\, dt.$$
	Then our claim \eqref{odgoal5} becomes \be \label{odgoal3} \int_{t_0}^{t_1} \wei(t) (\tilde K(t)-K(t))\, dt< -\epsilon.\ee 
	If $\epsilon\leq 0$, \eqref{odgoal3} is obvious, so let us assume $\epsilon>0$. Observe now that $$\epsilon\leq \wei(t_1) \int_{0}^{T_1}  |\tilde K(t)-K(t)|\, dt\leq C_\epsilon \wei(t_1),$$ where $C_\epsilon$ does not depend on $\gamma$. Fix any point $t_2\in (t_0,t_1)$. Let $\gamma$ be so large that $$\exp(\gamma(t_1-t_2))\int_{t_0}^{t_2} ( K(t)-\tilde K(t))\, dt\ge C_\epsilon.$$  Then we conclude that \begin{multline*} \int_{t_0}^{t_1} \wei(t) (\tilde K(t)-K(t))\, dt<  \int_{t_0}^{t_2} \wei(t) (\tilde K(t)-K(t))\, dt\\ \leq \wei(t_2) \int_{t_0}^{t_2}  (\tilde K(t)-K(t))\, dt=\exp(-\gamma t_2) \int_{t_0}^{t_2}  (\tilde K(t)-K(t))\, dt \\ \leq -\exp(-\gamma t_1)C_\epsilon\leq -\epsilon.\end{multline*}
\end{proof}

\begin{remark}[Quadratic nonlinearity and comparison with \cite{Ach7}] \label{remcons} The results of this section are new even for the quadratic case $\cf(v)=v\otimes v$, and are consequently applicable to the incompressible Euler equation, cf. \cite{CMP18,V22} (strictly speaking, the abstract setting of \cite{V22} slightly differs from \eqref{o:aeuler} and reads $\p_t v=P L(v\otimes v)$, where $P$ is a suitable projector that in particular can be the Leray–Helmholtz projector; however, in the quadratic case the proofs of the current section work, mutatis mutandis, in the presence of the projector $P$). Another global in time consistency result for the dual formulation of the incompressible Euler has recently been obtained in \cite[Theorem 3.6]{Ach7}. Nevertheless, our results (Theorem \ref{o:smooth} and Remark \ref{r:timeei}) seem to be of a different nature, and, furthermore, the two attitudes complement each other to a certain degree. Indeed, \cite[Theorem 3.6]{Ach7} states that if the strong solution $v$ to the incompressible Euler  coincides with the ``base state''  $\tilde v$, then the solution of the dual problem is identically zero (in the variables used in \cite{Ach7}).  Thus the information about the strong solution is contained not in the solution of the dual problem but in the ``base state'' only.  Moreover, the proof of \cite[Theorem 3.6]{Ach7} ignores the formal conservativity of the problem (and therefore can be extended to the Navier-Stokes).   In contrast, our ``base state'' is identically zero (see Remark \ref{remach}), and the information about the strong solution is contained in the solution $(B_+,E_+)$ of the dual problem. This information can be retrieved by formula \eqref{o:ta1} that strongly relies on the formal conservativity  \eqref{o:acons}. \end{remark}

	
\section{Solvability of the dual problem} \label{Secsol}
		In this section we prove solvability of the dual problem \eqref{o:conc} under an additional technical assumption on the operator $L$. 
		\begin{definition}[Strong trace condition] \label{deftr} The operator $L$ is said to satisfy the strong trace condition if there exists a uniform constant $c$ such that for any $\zeta\in D(L^*)$ such that  the eigenvalues of the matrix $-L^* \zeta(x)$  are uniformly bounded from above by a constant $k$ for a.e. $x\in \Omega$, the eigenvalues of the matrix $L^* \zeta(x)$ are also uniformly bounded from above a.e. in $\Omega$ by $ck$.\end{definition}
		\begin{remark}\label{r:tracel} The strong trace condition is particularly satisfied provided \be \label{e:invt} L(qI)=0\ee for any smooth scalar function $q(x)$. Indeed, it suffices to observe that in this case the trace of the matrix function $L^*\zeta$ vanishes almost everywhere because $$(\tr(L^*\zeta),q)=(L^*\zeta,qI)=(\zeta,L(qI))=0.$$ \end{remark}
		\begin{remark} \label{r:tracel2} Although the strong trace condition is not very restrictive and holds in many situations, cf. Section \ref{Sec3}, see also \cite{V22}, it fails for the system of conservation laws \eqref{e:conslaw} with a symmetric flux matrix $\cf$. Indeed, the adjoint of $-\di$ is the symmetric part of the Jacobian matrix, and there is no way to control its eigenvalues from above if they are bounded from below.  However, in many cases it possible to recast the problem in a form that admits the strong trace condition. In Section \ref{Sec3} we will show how to implement this for the scalar conservation laws. 
			\end{remark}

		\begin{theorem}[Existence in anisotropic Orlicz spaces] \label{t:ex} Assume that $L$ satisfies the strong trace condition. Then for any $v_0\in \lk(\Omega;\R^n)$ there exists a maximizer $(E,B)$ to \eqref{o:conc} in the class \eqref{o:be}, and $$0\le\frac {\Wei(0)|\Omega| \tr \cf(0)} 2 \leq  \tilde{\mathcal J}(v_0, T)< +\infty.$$  \end{theorem}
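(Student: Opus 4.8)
The plan is to run the direct method of the calculus of variations together with some convex-analytic bookkeeping, the essential leverage being that the strong trace condition forces a \emph{uniform} $L^\infty$-bound on the matrix part $B$ of every admissible pair.

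\textbf{A uniform bound on $B$.} I would first record that every pair $(E,B)$ admissible for \eqref{o:conc} — i.e.\ satisfying \eqref{o:constrweak} and \eqref{o:bwe} — obeys $\|B\|_{L^\infty((0,T)\times\Omega)}\le C_0$ with $C_0$ depending only on $\wei$, $N$ and the strong-trace constant $c$. Indeed, rewriting \eqref{o:constrweak} in its strong form \eqref{o:constr1} gives $B(t,\cdot)=L^*\zeta(t,\cdot)$ with $\zeta(t,\cdot):=-\int_t^T E(s,\cdot)\,ds\in D(L^*)$; condition \eqref{o:bwe} says $-2B\leql\wei I$, so the eigenvalues of $-L^*\zeta=-B$ are bounded above by $\|\wei\|_{L^\infty}/2$ a.e.; the strong trace condition then bounds the eigenvalues of $L^*\zeta=B$ from above by $c\|\wei\|_{L^\infty}/2$, while \eqref{o:bwe} bounds them from below by $-\|\wei\|_{L^\infty}/2$. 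Hence $|B|\le C_0$ a.e.

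\textbf{The value bounds.} The inequality $\tilde{\mathcal J}(v_0,T)\ge\frac{\Wei(0)|\Omega|\tr\cf(0)}{2}\ge0$ is exactly the computation of Remark \ref{orweak} (test with $(E,B)=(0,0)$; note $\tr\cf(0)=\cf(0):I\ge0$ because $\cf(0)\in\Mm$). For finiteness, fix an admissible $(E,B)$ and use in the infimum defining $\mathcal K$ in \eqref{o:defk1} the competitor $(z,M)=(v_0,\lamproj\cf(v_0))$: it lies in $\lk((0,T)\times\Omega;\R^n)\times L^1((0,T)\times\Omega;\R^{\nn}_s)$ by \eqref{e:lessss0}, and $\cf(v_0)\le\lamproj\cf(v_0)$ since $\cf(v_0)-\lamproj\cf(v_0)\perp\Lam$. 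Since $\lamproj\cf(v_0):I=\tr\cf(v_0)$ and $\int_\Omega\tr\cf(v_0)\,dx=2K_0$, this yields
\[
\int_0^T-(v_0,E)\,dt+\mathcal K(E,B)\le\tfrac12\int_0^T(\lamproj\cf(v_0),\wei I+2B)\,dt=\Wei(0)K_0+\int_0^T(\lamproj\cf(v_0),B)\,dt\le\Wei(0)K_0+C_0T\|\lamproj\cf(v_0)\|_{L^1(\Omega)},
\]
which is finite and independent of $(E,B)$; hence $\tilde{\mathcal J}(v_0,T)<+\infty$.

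\textbf{Existence of a maximizer.} Let $(E_k,B_k)$ be a maximizing sequence; since $\tilde{\mathcal J}(v_0,T)$ is finite, we may assume $\mathcal K(E_k,B_k)>-\infty$ and that $J_k:=\int_0^T-(v_0,E_k)\,dt+\mathcal K(E_k,B_k)$ is a bounded sequence. By the first step $\|B_k\|_{L^\infty}\le C_0$, so along a subsequence $B_k\rightharpoonup^*B$ in $L^\infty((0,T)\times\Omega;\R^{\nn}_s)$, and the convex, weak-$*$ closed condition \eqref{o:bwe} passes to $B$. For $E_k$ there is no a priori constraint, so I would extract a bound from the maximizing property: for every $w\in\lk((0,T)\times\Omega;\R^n)$ the pair $(v_0+w,\lamproj\cf(v_0+w))$ is admissible in \eqref{o:defk1}, whence, exactly as above,
\[
J_k\le\int_0^T(w,E_k)\,dt+\tfrac12\int_0^T(\lamproj\cf(v_0+w),\wei I+2B_k)\,dt\le\int_0^T(w,E_k)\,dt+C(w),
\]
with $C(w):=\tfrac12\sup_k\int_0^T(\lamproj\cf(v_0+w),\wei I+2B_k)\,dt<+\infty$ by $\|B_k\|_{L^\infty}\le C_0$ and \eqref{e:lessss0}. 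Since $\inf_kJ_k>-\infty$, this gives $\int_0^T(w,E_k)\,dt\ge-\tilde C(w)$; applying it to $-w$ as well, $\bigl|\int_0^T(w,E_k)\,dt\bigr|\le\tilde C(w)\vee\tilde C(-w)$ for all $w\in\lk$ and all $k$. By the uniform boundedness principle and the duality $\lks=(\lk)^*$, $\sup_k\|E_k\|_{\lks}<+\infty$; as $\lks$ is reflexive, along a further subsequence $E_k\rightharpoonup E$ in $\lks((0,T)\times\Omega;\R^n)$. The constraint \eqref{o:constrweak}, being linear, passes to the limit (test against a fixed smooth $\Psi$ with $\Psi(0)=0$, using $\p_t\Psi\in L^1$ and $L\Psi\in L^\infty\subset\lk$), so $(E,B)$ is admissible. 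Finally $\int_0^T-(v_0,E_k)\,dt\to\int_0^T-(v_0,E)\,dt$ since $v_0\in\lk$, and $\mathcal K$ is the infimum of the affine functionals $(E,B)\mapsto\int_0^T\bigl[(z,E)+\tfrac12(M,\wei I+2B)\bigr]\,dt$ over admissible $(z,M)\in\lk\times L^1$, each continuous for the weak$\times$weak-$*$ topology, hence $\mathcal K$ is weak$\times$weak-$*$ upper semicontinuous and $\limsup_k\mathcal K(E_k,B_k)\le\mathcal K(E,B)$. Therefore $\tilde{\mathcal J}(v_0,T)=\lim_kJ_k\le\int_0^T-(v_0,E)\,dt+\mathcal K(E,B)\le\tilde{\mathcal J}(v_0,T)$, so $(E,B)$ is a maximizer. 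The only step where nothing is handed to us is the bound on $E_k$, squeezed out of the maximizing property via the infimum structure of $\mathcal K$ and Banach--Steinhaus; the other non-generic ingredient is the $L^\infty$-bound on $B$, which is precisely what the strong trace condition is designed to supply.
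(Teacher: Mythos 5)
Your proof is correct and reaches the same conclusion, but the way you extract the a priori bound on $E$ differs from the paper's. Both proofs obtain the uniform $L^\infty$ bound on $B$ from the strong trace condition in exactly the same way. Where you diverge is in controlling $E$ along the maximizing sequence. The paper bounds $\mathcal K(E_m,B_m)$ from above by replacing $\wei I + 2B_m$ with $\wei c I$ (legitimate since $M\in\Mm$ and $\wei cI - \wei I - 2B_m\in\Lam\cap\Mmm$), then \emph{computes the infimum over $z$ exactly}, which yields a Legendre transform $-\int(\wei c,\ce^*(-E_m/(\wei c)))$; combined with nonnegativity of $J_m$ and the Fenchel--Young inequality, this gives a uniform bound on the $\ce^*$-modular of $E_m$, hence on $\|E_m\|_{\lks}$ via \eqref{e:modcont}. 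You instead plug a one-parameter family of explicit competitors $(v_0+w,\lamproj\cf(v_0+w))$ into $\mathcal K$ (using the cheap but crucial observation that $\cf(v)\le\lamproj\cf(v)$ in the $\Lam$-order, with $\lamproj\cf(v)\in L^1$ by \eqref{e:lessss0}) to deduce a pointwise bound $\sup_k|\int(w,E_k)|<\infty$ for each $w\in\lk$, and then invoke Banach--Steinhaus plus the duality $(\lk)^*=\lks$ and reflexivity. The paper's route is more quantitative (it produces an explicit $\ce^*$-modular estimate rather than an abstract bound), and as a byproduct the finiteness of $\tilde{\mathcal J}$ falls out of the same inequality chain; your route cleanly decouples finiteness (via the single competitor $(v_0,\lamproj\cf(v_0))$) from the compactness argument, at the cost of appealing to the uniform boundedness principle. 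The remaining steps — passing the linear constraint \eqref{o:constrweak} and the convex weak-$*$-closed constraint \eqref{o:bwe} to the limit, and upper semicontinuity of $\mathcal K$ as an infimum of weak$\times$weak-$*$ continuous affine functionals — match the paper.
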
  
		\begin{proof} It suffices to consider the pairs $(E,B)$ that meet the restrictions \eqref{o:constrweak}, \eqref{o:bwe}. Testing \eqref{o:conc} with $E=0,\;B=0$ as in Remark \ref{orweak}, we see that $$ \tilde{\mathcal J}(v_0,T)\ge \frac {\Wei(0)|\Omega| \tr \cf(0)} 2 \ge 0.$$ Let $(E_m,B_m)$ be a maximizing sequence.  Since $0\le \tilde{\mathcal J}(v_0, T)$, without loss of generality we may assume that \be\label{o:ms}0 \le -\int_0^T (v_0,E_m)\, dt+\bigkm.\ee The eigenvalues of $-B_m$ are uniformly bounded from above because $\wei I+2B_m\geql 0$. Since the strong trace condition is assumed, a uniform $L^\infty$ bound on $B_m$ follows directly from Definition \ref{deftr}. In other words, $\wei I+2B_m\leql \wei c I$ with some constant $c>0$ a.e. in $(0,T)\times\Omega$. By the definition of $\mathcal K$ in \eqref{o:defk1}, we have \begin{multline}\label{e:42} \bigkm\leq  \inf_{\cf(z)\le M}\int_0^T\left[(z,E_m)+\frac {\wei c} 2(M, I)\right]\,dt \\ = \inf_{z\in \lk}\int_0^T\left[(z,E_m)+\left(\wei c, \ce (z)+\frac 1 2 \tr \cf(0)\right) \right]\,dt  \\ =-\int_0^T \left(\wei c,\ce^*\left(-\frac {E_m}{\wei c}\right)\right)\,dt+\frac {c\Wei(0)|\Omega| \tr \cf(0)} 2 .\end{multline} Employing the Fenchel–Young  inequality we infer that 
			\begin{multline} \label{e:43} \int_0^T \left(\wei c,\ce^*\left(-\frac {E_m}{\wei c}\right)\right)\,dt \leq -\int_0^T (v_0,E_m)\, dt+\frac {c\Wei(0)|\Omega| \tr \cf(0)} 2\\=\int_0^T \frac {\wei c}2 \left( 2v_0,-\frac {E_m}{\wei c}\right)\, dt +\frac {c\Wei(0)|\Omega| \tr \cf(0)} 2\\ \leq  \int_0^T \frac {1}2 (\ce(2v_0),\wei c)\,dt+ \int_0^T \frac {1}2 \left(\ce^*\left(-\frac {E_m}{\wei c}\right),\wei c\right)\,dt+\frac {c\Wei(0)|\Omega| \tr \cf(0)} 2,\end{multline} whence \be \label{o:ms2}  \int_0^T \frac {1}2 \left(\ce^*\left(-\frac {E_m}{\wei c}\right),\wei c\right)\,dt \leq  \Wei(0)C,\ee where $C$ depends only on the $L^1$-norm of $\ce(2 v_0)$ that is finite. This together with \eqref{e:modcont} yields a uniform $\lks((0,T)\times \Omega;\R^n)$-bound on $E_m$. It follows from \eqref{e:42}, \eqref{e:43} and \eqref{o:ms2} that the right-hand side of \eqref{o:ms} is uniformly bounded, whence $\tilde{\mathcal J}(v_0, T)<+\infty$. The functional $\mathcal K$ is concave and upper semicontinuous on $\lks((0,T)\times \Omega;\R^n)\times L^\infty((0,T)\times \Omega;\R^{n\times n}_s)$ as an infimum of affine continuous functionals, cf.  \eqref{o:defk1}. The functional $\int_0^T (v_0,\cdot)\, dt$ is a linear bounded functional on $\lks((0,T)\times \Omega;\R^n)$. Consequently, every weak-$*$ accumulation point of $(E_m,B_m)$ is a maximizer of \eqref{o:conc}. Note that the constraints \eqref{o:constrweak}, \eqref{o:bwe} are preserved by the limit. 
		\end{proof}

		\begin{remark}[Back to the original problem] \label{r:time} Let $(E,B)$ be any maximizer of \eqref{o:conc} satisfying \eqref{o:be}. Mimicking \eqref{o:ta1}, we can define a ``generalized solution'' to \eqref{o:aeuler} by setting \be\label{o:geuler} \mom(t,x):=-\frac 1 {\Wei(t)} \int_{t}^T E(s,x)\,ds,\\ v(t,x):=\nabla \ce^*(\mom(t,x)).\ee  This object $v$ automatically belongs to the same regularity class as the strong solutions. Indeed, $$\partial_t [\Wei \mom]=E\in L_{\ce^*}((0,T)\times \Omega; \R^n).$$ On the other hand, since $$\frac 1{\Wei(t)}\lesssim \frac 1 {T-t},$$ our anisotropic Hardy inequality (see Appendix \ref{la1}) implies that $\mom \in \lks((0,T)\times \Omega; \R^n)$, whence $v \in L_{\ce}((0,T)\times \Omega; \R^n)$ by \eqref{e:lham2}. Finally, $\mom(t)\in D(L^*)$ for a.a. $t\in (0,T)$, and \begin{equation} \label{e:bandv}\Wei L^* (\mom)=B\in L^\infty((0,T)\times \Omega; \R^{n\times n}_s).\end{equation} Indeed, let $\psi:(0,T)\times \Omega\to \R^{\nn}_s$ be an arbitrary smooth compactly supported matrix field, and set $\Psi(t,\cdot):=\int_0^t \psi(\tau,\cdot)\,d\tau$; integrating by parts  and using \eqref{o:constrweak}, we deduce that \begin{multline*} \int_0^T\left(B(t,\cdot),\psi(t,\cdot)\right)\,dt= -\int_0^T\left(E(t,\cdot),L\Psi(t,\cdot)\right)\,dt=
				\\=-\int_0^T\left(\int_t^T E(\tau,\cdot)\,d\tau,L\psi(t,\cdot)\right)\,dt=\int_0^T (\Wei (t)L^* \mom (t,\cdot),\psi(t,\cdot))\,dt.\end{multline*}
			However, at this level of generality, $v$ is not necessarily a strong solution. 
			\end{remark}

			\section{Uniqueness of strong solutions} \label{Secun} We believe that it is possible to prove the weak-strong uniqueness property in the sense that existence of a strong solution (as in Definition \ref{d:strongclass}) implies that the corresponding solution \eqref{e:beee} to the dual problem \eqref{o:sadd2sub} is unique in the class \eqref{o:be}. The corresponding result in the quadratic case $\cf(v)=v\otimes v$ was established in \cite[Section 5]{V22}. As the first step towards this conjecture, we show here that a strong solution is always unique. The proof heavily relies on $\Lam$-convexity of $\cf$.
			
			\begin{theorem}[Uniqueness of strong solutions]\label{l.uniques}  Let $u$, $v$ be two strong solutions to  \eqref{o:aeuler} with the same initial datum $v_0\in \lk(\Omega;\R^n)$. Then, for every $t\in [0,T)$, $u(t,\cdot)=v(t,\cdot)$ a.e. in $\Omega$.
			\end{theorem}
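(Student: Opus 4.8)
The plan is to study the ``Jeffreys divergence'' between the two strong solutions, i.e.\ the symmetrized relative entropy
\[
  \mathcal D(t):=\int_\Omega \left(\mou(t,x)-\mom(t,x)\right)\cdot\left(u(t,x)-v(t,x)\right)\,dx,
\]
where $\mou=\nabla\ce(u)$, $\mom=\nabla\ce(v)$. By strict convexity of $\ce$ (and of $\ce^*$), the integrand is pointwise nonnegative and vanishes if and only if $u(t,x)=v(t,x)$; moreover $\mathcal D(0)=0$ since $u(0,\cdot)=v(0,\cdot)=v_0$. Since $\ce$ is an $N$-function and $u,v$ lie in the strong regularity class \eqref{e:vclass}, \eqref{o:strongclass}, Remark \ref{r:regul1} ensures $\mou,\mom\in C([0,T);\lks(\Omega;\R^n))$ and $u,v\in L_\ce$, so $\mathcal D\in C([0,T))$ and $\mathcal D$ is absolutely continuous in time (after multiplying by the weight $\Wei$, as in Remark \ref{r:regul1}, all the relevant time derivatives live in $\lks$). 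The goal is then to show $\mathcal D'(t)\le 0$ in an appropriate sense, which together with $\mathcal D(0)=0$ and $\mathcal D\ge 0$ forces $\mathcal D\equiv 0$ and hence $u=v$.

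To compute $\frac{d}{dt}\mathcal D(t)$, I would differentiate using the ``sharp'' equation \eqref{o:aeuler22} for each solution and the reciprocal chain-rule identities $u=\nabla\ce^*(\mou)$, $v=\nabla\ce^*(\mom)$. Writing $\mathcal D(t)=\int_\Omega[(\mou-\mom)\cdot u-(\mou-\mom)\cdot v]$ and splitting into the pieces $(\mou\cdot u)=\ce(u)+\ce^*(\mou)$, $(\mom\cdot v)=\ce(v)+\ce^*(\mom)$ (Fenchel equality), and the cross terms $-\mom\cdot u$, $-\mou\cdot v$, the time derivative reduces to expressions of the form $(\p_t\mou_l - \p_t\mom_l, u_l-v_l)$ plus terms that telescope because each individual solution conserves its own total entropy (Lemma \ref{lemm}). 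Substituting \eqref{o:aeuler22}, namely $\p_t(\mou)_l = -L^*(\mou):\p_l\cf(u)$ and similarly for $v$, and regrouping, I expect to land on an expression of the shape
\[
  \mathcal D'(t) = -\int_\Omega \left[L^*(\mou):\big(\p_l\cf(u)-\p_l\cf(v)\big)(u_l-v_l) \;+\; \text{symmetric term}\right]\,dx,
\]
with summation over $l$ understood, which after a convexity inequality should be shown to be $\le 0$. The clean way to see the sign is to test the weak formulation \eqref{o:w2} for $v$ with the pair $(E,B)$ built from $a=\Wei\mou$ (and vice versa, $u$ against $a=\Wei\mom$), exactly as in the proof of Theorem \ref{o:smooth}; the $\Lam$-convexity of $\cf$ means that for a.e.\ $(t,x)$ the function $y\mapsto \tfrac12\cf(y):(\wei I+2\Wei L^*(\mou)(t,x))$ is convex (note $\Wei L^*(\mou)$ projects into $\Lam\cap\Mmm$ after using $\wei I+2\Wei L^*(\mou)\geql 0$, which holds on $[0,T_1]$ for a suitable weight by Remark \ref{r:timeei}), so the first-order convexity inequality at the point $y=v$ with reference point $y=u$ gives the desired one-sided bound; adding the symmetric inequality (roles of $u$ and $v$ swapped) produces precisely $-\mathcal D'(t)\ge 0$.

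The main obstacle is the rigorous justification of the formal computation of $\mathcal D'(t)$: one cannot plug the nonlinear equation \eqref{o:aeuler22} directly into the weak formulation because $u,v$ need not be mutually admissible test functions, and the products $L^*(\mou):\p_l\cf(v)$ must be shown to be integrable and to pass to limits. I would handle this by the same mollification device used in Lemma \ref{lemm}: set $\mou_k:=\rho_k*\mou$, $\mom_k:=\rho_k*\mom$ on the enlarged torus $\qq=2T\mathbb T^1\times\Omega$, use Propositions \ref{p:nemyt}, \ref{p:conv} together with the pointwise bounds \eqref{e:lessss0}, \eqref{e:lessss}, \eqref{e:lham2}, \eqref{e:hamst} to guarantee the relevant strong/weak convergences of $\nabla\ce^*(\mou_k)$, $\cf(\nabla\ce^*(\mou_k))$, $\p_l\cf(\nabla\ce^*(\mou_k))$ and of $L^*(\mou_k)$ (weak-$*$ in $L^\infty$, after localizing in time away from $t=T$ with a cutoff), derive the entropy identity at the mollified level by ordinary integration by parts, and then pass to the limit. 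As in Theorem \ref{o:smooth}, the argument only needs the weight-adapted inequality \eqref{o:pd++}/\eqref{e:weiwei} on a slightly shorter interval $[0,T_1]$, $T_1<T$, and since $T_1$ is arbitrary the conclusion $u=v$ extends to all of $[0,T)$.
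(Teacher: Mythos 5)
You have correctly identified the key quantity (the ``Jeffreys divergence'' $\mathcal D(t)=\je(t)$) and the right basic ingredients (compute $\je'$ via the sharp equation \eqref{o:aeuler22}, invoke the $\Lam$-convexity of $\cf$, handle the regularity by mollification along the lines of Lemma~\ref{lemm}). So far this matches the paper's approach. However, the central analytic claim in your proposal --- that the convexity inequality yields $\mathcal D'(t)\le 0$ --- is false, and this is where the argument breaks down.

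To see the problem concretely, consider the quadratic model $\cf(v)=v\otimes v$, $\mom=v$. Setting $w:=u-v$ and using the formal conservativity $(w\otimes w,L^*w)=0$, the derivative simplifies to
\begin{equation*}
\je'(t)=-2\bigl(w\otimes w,\,L^*(\mou)\bigr).
\end{equation*}
Since $L^*(\mou)$ is merely essentially bounded and carries no definite sign (its eigenvalues are not one-sided in general), $\je'$ itself has no definite sign. In the general case, regrouping produces
$\je'=(-L^*(\mou),\mathrm{Breg}(v;u))+(-L^*(\mom),\mathrm{Breg}(u;v))$,
where the Bregman divergences pair nonnegatively against elements of $\Lam\cap\Mmm$ by $\Lam$-convexity, but again $-L^*(\mou)$ and $-L^*(\mom)$ are not in $\Mmm$. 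Appealing to the weighted inequality \eqref{o:pd++}/\eqref{e:weiwei} does not rescue this: it supplies an \emph{upper} bound $-L^*(\mou)\leql\tfrac{\wei}{2\Wei}I$, which at best yields a Gr\"onwall-type bound $\je'\lesssim\tfrac{\wei}{\Wei}\je$ whose coefficient fails to be integrable near $t=T_1$, and in any case does not give $\je'\le 0$.

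The missing idea is the one the paper actually uses: add and subtract a constant multiple $\gamma I$ (with $\gamma$ large enough that $\gamma I-L^*(\mou)$ and $\gamma I-L^*(\mom)$ lie in $\Mmm$, using only the essential boundedness of $L^*(\mou),L^*(\mom)$ on $[0,T_1]\times\Omega$, not the weight $\wei$), and then control the Bregman terms from \emph{above} via the two-sided convexity inequality \eqref{e53}, namely $0\le\Phi(y)-\Phi(\xi)-(y-\xi)\cdot\nabla\Phi(\xi)\le(y-\xi)\cdot(\nabla\Phi(y)-\nabla\Phi(\xi))$. After a trace identity that converts the extra $\gamma I$ pieces back into $\je$, one obtains the Gr\"onwall bound $\je'(t)\le 2(c+\gamma)\je(t)$, which together with $\je(0)=0$ and $\je\ge 0$ closes the argument. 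Your write-up lacks this step; as written, the assertion $\mathcal D'\le 0$ is a genuine gap, not a harmless shortcut.
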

			\begin{proof} To fix the ideas, we will assume that $u$, $v$ are regular enough to perform the manipulations below. (The proof in the general case follows exactly the same strategy with some tedious and rather standard technicalities:  in the (comparatively easy) quadratic case $\cf(v)=v\otimes v$ the rigorous implementation can be found in \cite[Lemma 5.2]{V22}). In order to avoid heavy notation, we will write $u(t)$ and $v(t)$, and often even $u$ and $v$, instead of $u(t,\cdot)$ and $v(t,\cdot)$. 
				
				The key idea is to look at the evolution of the ``Jeffreys divergence''\footnote{The classical Jeffreys divergence corresponds to the case $n=1$ and $\ce(v)=v \log v$, which is however ruled out by our assumptions.} $$\je(t):=(u(t)-v(t),\mou(t)- \mom(t))=(u(t)-v(t),\nabla \ce (u)(t)- \nabla \ce (v)(t)).$$ By strict convexity of $\ce$, $\je(t)\geq 0$, and $\je(t)=0$ if and only if $u(t)=v(t)$. Obviously, $\je(0)=0$, and it is enough to show that \be \label{e51}\je(t)= 0, \ t\in [0,T_1]\ee for any $T_1<T$. 
				
				Using \eqref{o:aeuler} and \eqref{o:aeuler2}, we compute \begin{multline}\je'(t)=(\p_t u(t)-\p_t v(t),\mou(t)- \mom(t))+(u(t)-v(t),\p_t\mou(t)- \p_t\mom(t))\\=(\cf (u)-\cf (v),L^* (\mou)- L^*(\mom))\\+\sum_{l=1}^n(u_l-v_l,L^* (\mom):\partial_l \cf(v)- L^* (\mou):\partial_l \cf(u))\\=\left(-L^*(\mou),\cf (v)-\cf(u)-\sum_{l=1}^n (v_l-u_l):\partial_l \cf(u)\right)\\+\left(-L^*(\mom),\cf (u)-\cf(v)-\sum_{l=1}^n (u_l-v_l):\partial_l \cf(v)\right).\end{multline}  It follows from Definition \ref{d:strongclass}, cf. \eqref{o:strongclass}, that $L^*(\mou)$ and $L^*(\mom)$ are essentially bounded on $[0,T_1]\times \Omega$. Thus there exists a constant $\gamma>0$ such that $\gamma I-L^*(\mou)$ and $\gamma I-L^*(\mom)$ are positive-definite matrix functions a.e. on $[0,T_1]\times \Omega$, and for some uniform $c>0$ we have $-L^*(\mou)-L^*(\mom)\leql cI$.
				Consequently, \begin{multline}\label{e52}\je'(t)=\left(\gamma I-L^*(\mou),\cf (v)-\cf(u)-\sum_{l=1}^n (v_l-u_l):\partial_l \cf(u)\right)\\+\left(\gamma I-L^*(\mom),\cf (u)-\cf(v)-\sum_{l=1}^n (u_l-v_l):\partial_l \cf(v)\right)\\-\gamma \sum_{l=1}^n \left(u_l-v_l,\tr[\partial_l \cf(u)-\partial_l \cf(v)]\right).\end{multline}
				
				Due to the $\Lam$-convexity of $\cf$, the functions $y\mapsto (\gamma I-L^*(\mou(t,x))):\cf(y)$ and $y\mapsto (\gamma I-L^*(\mom(t,x))):\cf(y)$ are convex w.r.t. $y\in \R^n$ for a.e. fixed pair of parameters $(t,x)\in [0,T_1]\times \Omega$. On the other hand, it is easy to see that for any convex function $\Phi:\R^n \to \R$ we have \be \label{e53} 0\leq \Phi(y)-\Phi (\xi)-(y-\xi)\cdot \nabla \Phi (\xi)\leq (y-\xi)\cdot (\nabla \Phi (y)-\nabla \Phi (\xi)), \ y,\xi\in\R^n.\ee Leveraging \eqref{e53}, we derive from \eqref{e52} that \begin{multline*}\je'(t)\le \left(\gamma I-L^*(\mou),\sum_{l=1}^n (v_l-u_l):(\partial_l \cf(v)-\partial_l \cf(u))\right)\\+\left(\gamma I-L^*(\mom),\sum_{l=1}^n (u_l-v_l):(\partial_l \cf(u)-\partial_l \cf(v))\right)\\-2\gamma \sum_{l=1}^n \left(u_l-v_l,\partial_l \ce(u)-\partial_l \ce(v)\right)\\ \leq \left((2\gamma+c) I,\sum_{l=1}^n (u_l-v_l):(\partial_l \cf(u)-\partial_l \cf(v))\right)-2\gamma  \left(u-v,\mou-\mom \right)\\=(4\gamma+2c) \sum_{l=1}^n \left(u_l-v_l,\partial_l \ce(u)-\partial_l \ce(v)\right)-2\gamma \je(t)=2(c+\gamma)\je(t),\end{multline*} and \eqref{e51} follows by Gr\"onwall's inequality. \end{proof}

\section{Applications to PDEs} \label{Sec3}

In this section present three relatively simple examples of dispersive equations to which our abstract theory is germane (NLS, NLKG and GKdV equations with generic defocusing power-law nonlinearities). We however believe that it is possible to go much beyond these examples by considering PDEs of higher order (as, e.g., in \cite{KS97,BL17}), with nonlocal terms (e.g., of Benjamin-Ono or Hartree type) or more general nonlinearities. In all these examples, we will be able to verify the assumptions of Section \ref{s:abst} and of Theorem \ref{t:ex}. Consequently, the theorems of Sections \ref{Secx}, \ref{Secsol} and \ref{Secun} are fully applicable here. 

To the best of our knowledge, global in time solvability for these PDEs is merely known under significant restrictions on the exponents in the power laws or on the size of the initial data, cf. \cite{sulem2007nonlinear,LP14,FRR23}. Theorem \ref{t:ex} and Remark \ref{r:time}  provide existence of certain ``dual variational solutions" for these problems without such restrictions. 

As a warm-up exercise, we show how our theory can be applied to scalar conservation laws. 

\subsection{Scalar conservation laws} In this example we further restrict ourselves to $\Omega=\mathbb T^1$. Consider the scalar conservation law
\begin{equation}\label{conslawnew} \p_t v= -\p_x (\ce(v)),\end{equation} where $\ce:\R\to\R$ is as in Assumption \ref{ass1}. Note that there is a tiny ``anisotropy'' because $\ce$ does not need to be even. As anticipated in Remark \ref{r:tracel2}, the operator $-\p_x$ does not satisfy the strong trace condition, so Theorem \ref{t:ex} is not directly applicable. In order to overcome this obstacle, we let $$n=1, N=2,$$
$$\cf(v)=\left(\begin{array}{@{}c|c@{}}
	  \ce(v) & \ce(v)  \\ \hline \ce(v) & \ce(v)
\end{array}\right).$$
Obviously, $\ce=\frac 1 2 \tr (\cf-\cf(0))$ and $\mom=\ce'(v)$.

We now define the operator $L$ by the formula $$ L\,\left(\begin{array}{@{}c|c@{}}
	a_{11} & a_{12}  \\  \hline 
	a_{12} & a_{22}  
\end{array}\right)=-\p_x a_{12}.$$ Then we can rewrite \eqref{conslawnew} in the abstract form \eqref{o:aeuler} with $L$ and $\cf$ that we have introduced.

Condition \eqref{e:invt} is obviously true, so the strong trace condition holds. Moreover, $$L^*(\kappa)=\frac 1 2 \left(\begin{array}{@{}c|c@{}}
	0 & \p_x \kappa  \\  \hline 
	\p_x \kappa  & 0 
\end{array}\right).$$

Let us check  the conservativity condition \eqref{o:acons}.  Remembering that we are now working on $\mathbb T^1$, we easily see that  $$ (\cf(v), L^*(\mom))=(\ce(v),\p_x (\ce'(v)))=-(\p_x v, (\ce'(v))^2)=0.$$

It is clear that $\cf\geql 0$ and $\cf$ is Loewner convex. The subspace $\Lam$ consists of the elements of the form
$$\left(\begin{array}{@{}c|c@{}}
	a_{11} & a_{12}  \\  \hline 
	a_{12} & a_{11}
\end{array}\right).$$ 
The validity of Assumptions  \ref{convf} and \ref{assl} is now obvious.

\subsection{GKdV}
We keep assuming $\Omega=\mathbb T^1$. Consider the  ``defocusing'' generalized Korteweg-de Vries equation  \cite{S81,FRR23,MS18} (GKdV)\footnote{To simplify the presentation we consider the real-valued equation, but the same approach works for the complex-valued case.}  \be\label{e:kdvxi} \p_t u+\p_{\xi\xi\xi} u=|u|^\alpha\p_\xi u, \quad u(0)=u_0.\ee The unknown is $u:[0,T]\times \Omega\to \R$, and $\alpha\geq 1$ is a prescribed constant (not necessarily integer).  Performing a simple change of variable $x:=\xi-t$ we rewrite this equation as follows: \be\label{e:kdv} \p_t u+\p_{xxx} u=(|u|^\alpha+1)\p_x u, \quad u(0)=u_0.\ee 

Letting $w:=\p_x u$ we can rewrite the problem in the following way: \begin{gather}\label{e:kdv1} \p_t u=-\p_{xx} w+\frac 1 {\alpha +1} \p_x(|u|^\alpha u)+\p_x u,\\\label{e:kdv2} \p_t w=-\p_{xxx} w+\frac 1 {\alpha +1} \p_{xx}(|u|^\alpha u)+\p_{xx} u,\\ \label{e:kdv3} \quad u(0)=u_0,\ w(0)=w_0.\end{gather}
Rigorously speaking, this is an extended system, but what is important is that it still possesses an ``entropy'' that is formally conserved even if we do not assume the compatibility condition $w_0=\p_x u_0$.

We let $$n=2, N=4,$$ $$ v=(u,w),$$ $$\bar v=(1,u,w,|u|^{\alpha}),$$ $$\ce(v)=\frac 1 2 (u^2+w^2)+\frac {|u|^{\alpha+2}}{(\alpha+2)(\alpha+1)},$$ \begin{multline*}\cf(v)=\epsilon \bar v\otimes \bar v+\left(\frac 12 \ce(v) + 1\right)e_1\otimes e_1+ \left(-\epsilon u^2+\frac 12 \ce(v)\right)  e_2\otimes e_2\\+\left(-\epsilon w^2+\frac 12 \ce(v)\right) e_3\otimes e_3+\left(-\epsilon|u|^{2\alpha}+\frac 12 \ce(v)\right)  e_4\otimes e_4,\end{multline*} where small $\epsilon>0$ will be selected later. It is straightforward to check that  $\ce=\frac 1 2 \tr (\cf-\cf(0))$ and that $\ce$ fits into the framework of Assumption \ref{ass1}. Furthermore, $$\mom=(z,w)=(u+(\alpha +1)^{-1} |u|^\alpha u,w);$$ observe that the second component coincides with $w$.
 
Define the operator $L$ by the formula $$ L\,\left(\begin{array}{@{}c|c|c|c@{}}
 	a_{11} & a_{12}  &  a_{13} & a_{14}  \\  \hline 
 	a_{12} & a_{22}  &  a_{23} & a_{24}   \\ \hline a_{13} & a_{23}  &  a_{33} & a_{34}   \\ \hline a_{14} & a_{24}  &  a_{34} & a_{44} 
 \end{array}\right)=\epsilon^{-1}\left(\begin{array}{@{}c@{}}
 	-\p_{xx} a_{13} +\frac 1 {\alpha +1} \p_{x}a_{24}+\p_x a_{12} \\ \hline 
 	-\p_{xxx} a_{13} +\frac 1 {\alpha +1} \p_{xx}a_{24}+\p_{xx} a_{12}
 \end{array}\right).$$ Then \eqref{e:kdv1}--\eqref{e:kdv2} can be written in the abstract form \eqref{o:aeuler}. 
 
 Condition \eqref{e:invt} is obviously true, so the strong trace condition holds. Moreover,
 
\begin{multline*} L^*\,\left(\begin{array}{@{}c@{}}
		\kappa \\ \hline 
		\theta 
	\end{array}\right)=\frac {\epsilon^{-1}}2 \\ \times \left(\begin{array}{@{}c|c|c|c@{}}
		0 & -\p_{x}\kappa + \p_{xx}\theta&-\p_{xx}\kappa +  \p_{xxx}\theta & 0   \\  \hline 
		-\p_{x}\kappa + \p_{xx}\theta & 0  &  0 & -\frac 1 {\alpha +1} \p_{x}\kappa  +\frac 1 {\alpha +1} \p_{xx}\theta   \\ \hline -\p_{xx}\kappa +  \p_{xxx}\theta & 0  &  0 & 0   \\ \hline 0 & -\frac 1 {\alpha +1} \p_{x}\kappa  +\frac 1 {\alpha +1} \p_{xx}\theta  &  0 & 0 
	\end{array}\right).\end{multline*}

Let us check  the conservativity condition \eqref{o:acons}. We compute, integrat\-ing by parts where needed, \begin{multline*}  (\cf(v), L^*(\mom))=\\ (u,-\p_{x}z + \p_{xx}w) +(w,-\p_{xx}z +  \p_{xxx}w)+\left(|u|^\alpha u,-\frac 1 {\alpha +1} \p_{x}z  +\frac 1 {\alpha +1} \p_{xx}w\right)\\=(u,-\p_{x}z + \p_{xx}w) +(\p_{xx}w,-z +  \p_{x}w)+\left(z-u,- \p_{x}z  + \p_{xx}w\right)\\=(\p_{xx}w, \p_{x}w)+\left(z,- \p_{x}z \right)=0.\end{multline*}
 
Observe that $\Lam$ consists of the elements of the form
 $$\left(\begin{array}{@{}c|c|c|c@{}}
 	a_{11} & a_{12}  &  a_{13} & 0\\  \hline 
 	a_{12} & a_{11}  &  0 & a_{24}   \\ \hline a_{13} & 0  &  a_{11} & 0  \\ \hline 0 & a_{24}  &  0 & a_{11}  
 \end{array}\right)$$
 Let us check that $\cf\geq 0$ and that $\cf$ is $\Lam$-convex. Fix any matrix $$ P=\left(\begin{array}{@{}c|c|c|c@{}}
 	p_{11} & p_{12}  &  p_{13} & 0\\  \hline 
 	p_{12} & p_{11}  &  0 & p_{24}   \\ \hline p_{13} & 0  &  p_{11} & 0  \\ \hline 0 & p_{24}  &  0 & p_{11}  
 \end{array}\right)\geql 0.$$ Then \begin{multline} \label{e:76}\cf(v):P=\epsilon p_{11}+\epsilon u^2 p_{11}+ \epsilon w^2 p_{11}+\epsilon|u|^{2\alpha} p_{11}\\+2 \epsilon u p_{12}+2 \epsilon w p_{13} +2 \epsilon|u|^{\alpha} u  p_{24}\\+\frac 1 2 \ce(v) p_{11}+p_{11}+ \left(-\epsilon u^2+\frac 1 2 \ce(v)\right)  p_{11}\\+\left(-\epsilon w^2+\frac 1 2 \ce(v)\right) p_{11}+\left(-\epsilon|u|^{2\alpha}+\frac 1 2 \ce(v)\right)  p_{11}\\= \left[\epsilon+1+u^2+w^2+2\frac {|u|^{\alpha+2}}{(\alpha+2)(\alpha+1)}\right]p_{11}+2 \epsilon u p_{12}+2 \epsilon w p_{13} +2 \epsilon|u|^{\alpha} u  p_{24}\end{multline}  Since $P \geql 0$, \be \max\{|p_{12}|, |p_{13}|, |p_{24}|\}\leq p_{11}.\ee Thus the function in \eqref{e:76} is non-negative provided $\epsilon$ is sufficiently small. The Hessian of the function in \eqref{e:76} w.r.t. $v=(u,w)$ is
 $$\left(\begin{array}{@{}c|c@{}}
  2 p_{11}+2 |u|^{\alpha}p_{11}+2\epsilon\alpha(\alpha+1) |u|^{\alpha-2} u p_{24} & 0
 	\\ \hline 0 & 2 p_{11}  
 \end{array}\right).$$ This matrix is always non-negative-definite provided $\epsilon$ is less than or equal to a certain constant that depends only on $\alpha$. 
 
We now we need to compute the partial derivatives of the non-diagonal components of $\lamproj\cf$ w.r.t. the components of $v$:
 $$\p_u \cf_{12}=\epsilon,$$
 $$\p_u \cf_{13}=0,$$
 $$\p_u \cf_{24}=\epsilon(\alpha+1)|u|^\alpha\lesssim |u|^{\alpha+1}+1\lesssim |\mom|+1,$$
 $$\p_w \cf_{12}=0,$$
 $$\p_w \cf_{13}=\epsilon,$$
 $$\p_w \cf_{24}=0.$$
 This immediately implies \eqref{e:lessss}.
 
 \begin{remark}[Lack of  Loewner convexity] \label{lowtoostrong} It is interesting to observe the full Loewner convexity is missing for the matrix function $\cf$  above. Indeed, let $$ P=\left(\begin{array}{@{}c|c|c|c@{}}
 		0& 0  &  0 & 0\\  \hline 
 		0 &0  &  0 & 0   \\ \hline 0 & 0  &  1 & 1  \\ \hline 0 & 0  &  1 & 1
 	\end{array}\right)\geql 0.$$ Then, for any $\alpha\geq 1$ and any fixed $\epsilon>0$, the function $$(u,w)\mapsto \cf(v):P=\ce(v)+2\epsilon|u|^{\alpha} w = \frac 1 2 (u^2+w^2)+\frac {|u|^{\alpha+2}}{(\alpha+2)(\alpha+1)}+2\epsilon|u|^{\alpha} w$$ is obviously not convex on $\R^2$. Similar counterexamples can be constructed for the systems discussed in the sequel. \end{remark}

\begin{remark}[Sharp formulation of GKdV] For any $\alpha\geq 1$, using the obtained expressions of the partial derivatives of $\cf$, it is elementary to see that the ``sharp'' problem \eqref{o:aeuler2} for GKdV reads \begin{gather*}\p_t z+(1+\Xi(|z|))(-\p_{x} z + \p_{xx} w)=0,\\ \p_t w-\p_{xx}z +  \p_{xxx}w=0,\end{gather*}  where $\Xi$ is the inverse of the function $s \mapsto s^{1/\alpha} (1+(\alpha+1)^{-1} s)$.\end{remark}

\subsection{Defocusing NLS} Let $\Omega=\mathbb T^d$. Consider the  nonlinear Schr\"odinger equation with a generic defocusing power-law nonlinearity
\be \iu \p_t\Psi=-\Delta \Psi +  |\Psi|^{2q}\Psi,\ \Psi(0)=\Psi_0,\ee where $q\geq \frac 1 2 $ is a given constant.   The unknown is $\Psi:[0,T]\times \Omega\to \mathbb C$. 

We first change the variable $\psi := \Psi e^{-\iu t}$  to rewrite this in the form \be \iu \p_t\psi=-\Delta \psi +  |\psi|^{2q}\psi+ \psi,\  \psi(0)=\Psi_0.\ee We now let $a=\Rea \psi, b=\Ima \psi, \aaa=\nabla a, \bbb=\nabla b$. Then the system becomes  \begin{gather}\label{nls1} \p_t a=-\di \bbb+(a^2+b^2)^q b+b,\\  \label{nls2}  \p_t b=\di \aaa -(a^2+b^2)^q a- a,\\ \label{nls3} \p_t \aaa=-\Delta \bbb+ \nabla ((a^2+b^2)^q b+ b),\\  \label{nls4} \p_t \bbb=\Delta \aaa- \nabla ((a^2+b^2)^q a- a). \end{gather} As in the previous example, this is an extended system, but we will see that it still possesses an ``entropy'' that is formally conserved.

We let $$n=2d+2, N=2d+4,$$ $$ v=(a,b,\aaa,\bbb),$$ $$\bar v=(1,a,b,\aaa,\bbb,(a^2+b^2)^q),$$
 $$\ce(v)=\frac 1 2 \left(|\aaa|^2+|\bbb|^2+ a^2+b^2+\frac 1 {q+1} (a^2+b^2)^{q+1}\right),$$ 
\begin{multline*}\cf(v)=\epsilon \bar v\otimes \bar v+\left (\frac 2 N \ce(v)+1\right)e_1\otimes e_1+ \left(-\epsilon a^2+\frac 2 N \ce(v)\right)  e_2\otimes e_2\\+\left(-\epsilon b^2+\frac 2 N \ce(v)\right) e_3\otimes e_3+\sum_{m=1}^d \left(-\epsilon|\aaa_m|^2+\frac 2 N \ce(v)\right)  e_{3+m}\otimes e_{3+m}\\+\sum_{m=1}^d \left(-\epsilon|\bbb_m|^2+\frac 2 N \ce(v)\right)  e_{3+d+m}\otimes e_{3+d+m}+\left(-\epsilon(a^2+b^2)^{2q}+\frac 2 N \ce(v)\right)  e_N\otimes e_N,\end{multline*} where small $\epsilon>0$ will be selected later. It is straightforward to check that  $\ce=\frac 1 2 \tr (\cf-\cf(0))$ and that 
$$\mom=(y,z,\aaa,\bbb)=(a+(a^2+b^2)^q a, b +(a^2+b^2)^q b,\aaa,\bbb).$$ Define the operator $L$ by the formula
$$ L\,\left(\begin{array}{@{}c|c|c|c|c|c@{}}
	a_{11} & a_{12}  &  a_{13} & A_{14} & A_{15} & a_{16} \\  \hline 
	a_{12} & a_{22}  &  a_{23} & A_{24} & A_{25} & a_{26} \\ \hline a_{13} & a_{23}  &  a_{33} & A_{34} & A_{35} & a_{36} \\ \hline A_{14}^\top & A_{24}^\top  &  A_{34}^\top & A_{44} & A_{45} & A_{46} 
	\\ \hline A_{15}^\top & A_{25}^\top  &  A_{35}^\top & A_{45}^\top & A_{55} & A_{56} \\ \hline a_{16} & a_{26}  &  a_{36} & A_{46}^\top & A_{56}^\top & a_{66}
\end{array}\right)=\epsilon^{-1}\left(\begin{array}{@{}c@{}}
	-\di A_{15} +a_{36}+ a_{13}\\ \hline 
	\di A_{14} -a_{26}- a_{12}\\ \hline -\Delta A_{15} +\nabla  (a_{36}+ a_{13})\\ \hline \Delta A_{14}-\nabla (a_{26}+a_{12})
\end{array}\right).$$
Then \eqref{nls1}--\eqref{nls4} can be rewritten in the abstract form \eqref{o:aeuler}. 

 Condition \eqref{e:invt} is obviously true, so the strong trace condition holds. Moreover,
\begin{multline*} L^*\,\left(\begin{array}{@{}c@{}}
	\kappa \\ \hline 
	\theta \\ \hline \eta \\ \hline \xi
\end{array}\right)=\frac {\epsilon^{-1}}2\\ \times \left(\begin{array}{@{}c|c|c|c|c|c@{}}
	0 & -\theta+\di \xi &  \kappa-\di \eta & -\nabla \theta+\Delta \xi & \nabla \kappa-\Delta \eta & 0 \\  \hline 
	-\theta+\di \xi & 0  &  0 & 0 & 0 & -\theta+\di \xi \\ \hline \kappa-\di \eta & 0  &  0 & 0 & 0 & \kappa-\di \eta \\ \hline (-\nabla \theta+\Delta \xi)^\top & 0  &  0 & 0 & 0 & 0
	\\ \hline (\nabla \kappa-\Delta \eta )^\top & 0  &  0 & 0 & 0 & 0 \\ \hline 0 & -\theta+\di \xi &  \kappa-\di \eta & 0 & 0 & 0
\end{array}\right).\end{multline*}
Let us check the conservativity condition \eqref{o:acons}. We compute, integrating by parts where needed,
\begin{multline*}  (\cf(v), L^*(\mom))=(a,-z+\di \bbb)+(b,y-\di \aaa)+(\aaa,-\nabla z+\Delta \bbb)+(\bbb,\nabla y-\Delta \aaa )\\ +(a(a^2+b^2)^{q},-z+\di \bbb)+(b(a^2+b^2)^{q},y-\di \aaa)\\=(y,-z+\di \bbb)+(z,y-\di \aaa) + (\di\aaa, z)-(\nabla \aaa,\nabla \bbb)- (\di\bbb, y)+(\nabla \bbb,\nabla \aaa)=0.\end{multline*}

Observe now that $\Lam$ consists of the elements of the form
$$\left(\begin{array}{@{}c|c|c|c|c|c@{}}
	a_{11} & a_{12}  &  a_{13} & A_{14} & A_{15} & 0 \\  \hline 
	a_{12} & a_{11}  &  0 & 0 & 0 & a_{26} \\ \hline a_{13} & 0  &  a_{11} & 0 & 0 & a_{36} \\ \hline A_{14}^\top & 0^\top  &  0^\top & a_{11} I & 0  & 0
	\\ \hline A_{15}^\top & 0  &  0 & 0 & a_{11} I  & 0 \\ \hline 0 & a_{26}  &  a_{36} & 0 & 0 & a_{11}
\end{array}\right)$$ Let us show that $\cf\geq 0$ and that $\cf$ is $\Lam$-convex.
Fix  $$P=\left(\begin{array}{@{}c|c|c|c|c|c@{}}
	p_{11} & p_{12}  &  p_{13} & P_{14} & P_{15} & 0 \\  \hline 
	p_{12} & p_{11}  &  0 & 0 & 0 & p_{26} \\ \hline p_{13} & 0  &  p_{11} & 0 & 0 & p_{36} \\ \hline P_{14}^\top & 0^\top  &  0^\top & p_{11} I & 0  & 0
	\\ \hline P_{15}^\top & 0  &  0 & 0 & p_{11} I  & 0 \\ \hline 0 & p_{26}  &  p_{36} & 0 & 0 & p_{11}
\end{array}\right)\geql 0.$$
Then, in the same spirit as for the GKdV, we compute \begin{multline} \label{e:767}\cf(v):P= \left[\epsilon+1+ \aaa^2+\bbb^2+ a^2+b^2+\frac 1 {q+1} (a^2+b^2)^{q+1} \right]p_{11}\\+2 \epsilon a p_{12}+2 \epsilon b p_{13} +2 \epsilon P_{14}\aaa+2 \epsilon P_{15}\bbb\\+2 \epsilon(a^2+b^2)^{q} a  p_{26}+2 \epsilon(a^2+b^2)^{q} b  p_{36}.\end{multline} Since $P\geql 0$, it is tedious but elementary to check that the function in \eqref{e:767} and its Hessian w.r.t. $v=(a,b,\aaa,\bbb)$ are non-negative and non-negative-definite, resp., provided $\epsilon$ is controlled by a constant that depends only on $q\ge \frac 1 2$. 

It remains to estimate the partial derivatives of the non-diagonal components of $\lamproj\cf$ w.r.t. the components of $v$. The only potentially problematic terms (because of the presence of a nonlinearity) are 
$$\p_a \cf_{26}=\epsilon(a^2+b^2)^{q}+2\epsilon qa^2(a^2+b^2)^{q-1},$$
$$\p_b \cf_{26}=2\epsilon qab(a^2+b^2)^{q-1},$$
$$\p_a \cf_{36}=2\epsilon qab(a^2+b^2)^{q-1},$$
$$\p_b \cf_{36}=\epsilon(a^2+b^2)^{q}+2\epsilon qb^2(a^2+b^2)^{q-1}.$$
All of them are $$\lesssim (a^2+b^2)^{q}\lesssim  (a^2+b^2)^{\frac {2q+1}{2}} +1 \lesssim |\mom|+1,$$  which yields validity of assumption \eqref{e:lessss}.

\section{Complex NLKG}  Let $\Omega=\mathbb T^d$. Consider the complex Klein-Gordon equation with a generic (defocusing) power-law nonlinearity  \be \p_{tt}\psi-\Delta \psi +  |\psi|^{2q}\psi+\psi=0,\ \psi(0)=\psi_0.\ee Here $q\geq \frac 1 2 $ is a given constant. The unknown is $\psi:[0,T]\times \Omega\to \mathbb C$. 

We let $a=\Rea \psi, b=\Ima \psi, \aaa=\nabla a, \bbb=\nabla b, u=\p_t a, v=\p_t b$. Then the system can be rewritten in the following form: \begin{gather} \p_t a=u,\\
	\p_t b=v,\\ \p_t \aaa =\nabla u,\\   \p_t \bbb=\nabla v,\\ 
	\p_t u=\di \aaa-(a^2+b^2)^q a- a,\\   \p_t v=\di \bbb -(a^2+b^2)^q b- b. \end{gather} As above, this is an extended system, but what matters is that it is formally conservative. 
	
We let $$n=2d+4, N=2d+6,\ v=(a,b,\aaa,\bbb,u,v),$$ $$\bar v=(1,a,b,\aaa,\bbb,u,v,(a^2+b^2)^q),$$
$$\ce(v)=\frac 1 2 \left(u^2+v^2+|\aaa|^2+|\bbb|^2+ a^2+b^2+\frac 1 {q+1}(a^2+b^2)^{q+1}\right),$$ 
\begin{multline*}\cf(v)=\epsilon \bar v\otimes \bar v+\left(\frac 2 N \ce(v) +1\right)e_1\otimes e_1+ \left(-\epsilon a^2+\frac 2 N \ce(v)\right)  e_2\otimes e_2\\+\left(-\epsilon b^2+\frac 2 N \ce(v)\right) e_3\otimes e_3+\sum_{m=1}^d \left(-\epsilon|\aaa_m|^2+\frac 2 N \ce(v)\right)  e_{3+m}\otimes e_{3+m}\\+\sum_{m=1}^d \left(-\epsilon|\bbb_m|^2+\frac 2 N \ce(v)\right)  e_{3+d+m}\otimes e_{3+d+m}\\+\left(-\epsilon u^2 +\frac 2 {N} \ce(v)\right)  e_{N-2}\otimes e_{N-2}\\+\left(-\epsilon v^2+\frac 2 {N} \ce(v)\right)  e_{N-1}\otimes e_{N-1}\\+\left(-\epsilon (a^2+b^2)^{2q}+\frac 2 {N} \ce(v)\right)  e_{N}\otimes e_{N},\end{multline*} where small $\epsilon>0$ will be selected later. It is straightforward to check that  $\ce=\frac 1 2 \tr (\cf-\cf(0))$.
We have
$$\mom=(y,z,\aaa,\bbb,u,v)=( a+a (a^2+b^2)^q , b +b (a^2+b^2)^q,\aaa,\bbb,u,v).$$ We now set
$$ L\,\left(\begin{array}{@{}c|c|c|c|c|c|c|c@{}}
	a_{11} & a_{12}  &  a_{13} & A_{14} & A_{15} & a_{16} & a_{17}& a_{18}\\  \hline 
	a_{12} & a_{22}  &  a_{23} & A_{24} & A_{25} & a_{26} & a_{27}& a_{28}\\ \hline a_{13} & a_{23}  &  a_{33} & A_{34} & A_{35} & a_{36}& a_{37}& a_{38} \\ \hline A_{14}^\top & A_{24}^\top  &  A_{34}^\top & A_{44} & A_{45} & A_{46} & A_{47}& A_{48}
	\\ \hline A_{15}^\top & A_{25}^\top  &  A_{35}^\top & A_{45}^\top & A_{55} & A_{56}& A_{57}& A_{58} \\ \hline a_{16} & a_{26}  &  a_{36} & A_{46}^\top & A_{56}^\top & a_{66} & a_{67}& a_{68} \\ \hline 
	a_{17} & a_{27}  &  a_{37} & A_{47} ^\top & A_{57}^\top & a_{67} & a_{77}& a_{78} \\ \hline
	a_{18} & a_{28}  &  a_{38} & A_{48}^\top & A_{58}^\top & a_{68} & a_{78}& a_{88}
\end{array}\right)= \epsilon^{-1} \left(\begin{array}{@{}c@{}}
	 a_{16}\\ \hline 
	a_{17}\\ \hline \nabla a_{16}\\ \hline \nabla a_{17} \\ \hline \di A_{14}-(a_{28}+a_{12})\\ \hline \di A_{15}-(a_{38}+a_{13})
\end{array}\right).$$
Then \eqref{nls1}--\eqref{nls4} can be rewritten in the abstract form \eqref{o:aeuler}. 

It is clear that
$$ L^*\,\left(\begin{array}{@{}c@{}}
	\kappa \\ \hline 
	\theta \\ \hline \eta \\ \hline \xi \\ \hline \chi \\ \hline \omega
\end{array}\right)=\frac {\epsilon^{-1}} 2 \left(\begin{array}{@{}c|c|c|c|c|c|c|c@{}}
	0 & -\chi &  -\omega& -\nabla \chi & -\nabla \omega & \kappa-\di \eta & \theta-\di \xi & 0 \\  \hline 
	-\chi & 0  &  0 & 0 & 0 & 0& 0& -\chi \\ \hline -\omega & 0  &  0 & 0 & 0 & 0& 0 & -\omega\\ \hline -\nabla \chi^\top & 0  &  0 & 0 & 0 & 0 & 0 & 0
	\\ \hline -\nabla \omega^\top & 0  &  0 & 0 & 0 & 0 & 0 & 0 \\ \hline \kappa-\di \eta & 0 &  0 & 0 & 0 & 0 & 0 & 0\\ \hline \theta-\di \xi & 0 & 0 & 0 & 0 & 0 & 0 & 0 \\ \hline 0 & -\chi &  -\omega & 0 & 0 & 0 & 0 & 0
\end{array}\right).$$
The validity of Assumptions \ref{convf} and \ref{assl} and of condition \eqref{e:invt} can be shown in a way similar to the NLS case.  Let us just check Assumption \ref{a:consc}. We compute, integrating by parts where needed,
\begin{multline*}  (\cf(v), L^*(\mom))=(a,-u)+(b,-v)+(\aaa,-\nabla u)+(\bbb,-\nabla v)+(u,y-\di \aaa)\\ +(v,z-\di \bbb) +(a(a^2+b^2)^{q},-u)+(b(a^2+b^2)^{q},-v)\\=(y,-u)+(z,-v) + (\di\aaa, u)+(\di\bbb, v)+(\nabla \bbb,\nabla \aaa)\\ +(u,y-\di \aaa)+(v,z-\di \bbb)=0.\end{multline*} 


\subsection*{Acknowledgment} The author thanks Yann Brenier and Iv\'an Moyano for inspiring discussions. 

\subsection*{Data availability statement} Data sharing is not applicable to this article as no datasets were
generated or analyzed during the current study.

\appendix \section{An anisotropic Hardy inequality} \label{la1}

\begin{proposition}  Let $\ce$ be a $C^1$-smooth $N$-function such that $\ce$ and $\ce^*$ satisfy the $\Delta_2$-condition. Given $f\in \lk((0,T)\times \Omega;\R^n)$, define the function $$F(t,x):=\frac 1 t\int_0^t f(s,x)\,ds.$$  Then $$F\in \lk((0,T)\times \Omega;\R^n)$$ and \be \|F\|_{\lk((0,T)\times \Omega)}\lesssim \|f\|_{\lk((0,T)\times \Omega)}.\ee \end{proposition}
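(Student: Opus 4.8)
The plan is to reduce the anisotropic estimate to a one-dimensional scalar inequality via the modular characterization of the norm. First I would recall the classical fact (already invoked in Remark \ref{r:time}) that the Luxemburg norm is controlled by the modular through \eqref{e:modcont}, and that convergence in norm is equivalent to convergence in modular. So it suffices to show the modular bound
\[
\int_0^T\!\!\int_\Omega \ce\!\left(\lambda F(t,x)\right)\,dx\,dt \lesssim \int_0^T\!\!\int_\Omega \ce\!\left(\lambda f(t,x)\right)\,dx\,dt
\]
for a suitable fixed rescaling $\lambda$, with a constant independent of $f$; then normalizing $f$ so that its modular is $\le 1$ gives the claim. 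By Fubini it is enough to treat this for a.e. fixed $x\in\Omega$, i.e. to prove the genuinely one-dimensional statement: if $g\in\lk((0,T);\R^n)$ and $G(t):=\frac1t\int_0^t g(s)\,ds$, then $\int_0^T \ce(G(t))\,dt \lesssim \int_0^T \ce(g(t))\,dt$.

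For the scalar (one-dimensional, vector-valued) step I would argue as follows. Since $\ce$ is convex and even with $\ce(0)=0$, Jensen's inequality applied to the probability measure $\frac1t\,ds$ on $(0,t)$ gives $\ce(G(t)) = \ce\!\left(\frac1t\int_0^t g(s)\,ds\right) \le \frac1t\int_0^t \ce(g(s))\,ds$. Writing $\phi(s):=\ce(g(s))\ge 0$, we must bound $\int_0^T \frac1t\int_0^t \phi(s)\,ds\,dt$ by a constant times $\int_0^T \phi(s)\,ds$. This is false in general for the half-line (the classical Hardy inequality needs a power weight), but on the \emph{bounded} interval $(0,T)$ it holds: by Tonelli,
\[
\int_0^T \frac1t\int_0^t \phi(s)\,ds\,dt = \int_0^T \phi(s)\left(\int_s^T \frac{dt}{t}\right) ds = \int_0^T \phi(s)\,\log\frac{T}{s}\,ds,
\]
and $\log(T/s)$ is \emph{not} bounded near $s=0$, so this crude estimate is not quite enough and one must be slightly more careful. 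The standard fix is to iterate Jensen against a weight: replace the uniform average by using that $\ce$ controls a power $L^p$ for large arguments (Proposition \ref{la0} and \cite[Lemma 2.3.16]{Gwiazda}), handle the ``large'' part of $g$ by the classical $L^p$ Hardy inequality on a bounded interval (where the weight issue does not arise because $1/t \lesssim 1/(T-\cdot)$ after the natural reflection, or simply because $L^p(0,T)\hookrightarrow L^1(0,T)$ and the Hardy operator is bounded on $L^p(0,T)$ for $p>1$), and absorb the ``small'' part of $g$ — where $\ce(g)\lesssim |g|^2$ or is otherwise dominated — into an $L^1$ term using $\int_0^T|G(t)|\,dt\le \int_0^T\log(T/s)\,|g(s)|\,ds \lesssim \|g\|_{L^p}$, finitely. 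Concretely: split $\ce$ as $\ce(v)\le C(|v|^p\mathbf 1_{|v|\ge r} + |v|^p \mathbf 1_{|v|<r}) $ is too lossy, so instead split $g = g\mathbf 1_{|g|\ge r} + g\mathbf 1_{|g|<r}=:g_1+g_2$, note $G = G_1+G_2$, use convexity $\ce(G)\le \tfrac12\ce(2G_1)+\tfrac12\ce(2G_2)$, control $G_1$ via the $L^p$-boundedness of the Hardy operator on $(0,T)$ together with $\ce(v)\gtrsim |v|^p$ for $|v|\ge r$ and $\Delta_2$, and control $G_2$ by observing $|g_2|\le r$ so $|G_2|\le r$ and $\ce(2G_2)\le \sup_{|w|\le 2r}\ce(w)\cdot\mathbf 1_{G_2\ne 0}$, whose integral is $\le T\sup_{|w|\le 2r}\ce(w)$, finite and independent of $f$ once the modular of $f$ is normalized.

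The main obstacle I anticipate is exactly this logarithmic endpoint: the naive Tonelli computation produces the non-integrable weight $\log(T/s)$, so one cannot simply push Jensen through and integrate; the resolution is the truncation argument above, trading the genuine Hardy estimate (valid on $L^p(0,T)$, $p>1$, for the averaging operator since $1/t$ is bounded by $C/(T-t)$ on $[0,T/2]$ and integrable near $0$ after... no — rather, since the Hardy operator $g\mapsto \frac1t\int_0^t g$ is bounded $L^p(0,\infty)\to L^p(0,\infty)$ for $p>1$ by the classical inequality, hence a fortiori bounded on $L^p(0,T)$) for the good part, and a trivial $L^\infty$/measure bound for the bounded part. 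One should also double-check that $F$ is measurable and that the manipulations with Fubini are justified, which is routine since $f\in\lk\subset L^1$ by \eqref{e:modcont}-type reasoning on the bounded domain. Finally, the $\Delta_2$-condition on $\ce$ is used both to pass from $\ce(2\,\cdot)$ back to $\ce(\cdot)$ (so that the two split pieces recombine) and implicitly in the equivalence of modular and norm convergence; $\Delta_2$ on $\ce^*$ is not really needed here but is harmless. Assembling the pieces yields $\int_0^T\int_\Omega \ce(F)\lesssim 1$ whenever $\int_0^T\int_\Omega\ce(f)\le 1$, and rescaling gives the stated norm inequality.
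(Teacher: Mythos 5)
You correctly identify the central obstacle (the unbounded $\log(T/s)$ weight that emerges from Jensen and Tonelli), but the proposed split-plus-$L^p$-Hardy fix conflates two different growth exponents of $\ce$ and does not close the gap. The exponent $p>1$ for which $\ce(v)\gtrsim|v|^p$ for $|v|\geq r$ comes from the $\Delta_2$-condition on $\ce^*$ (superlinear growth of $\ce$); the $\Delta_2$-condition on $\ce$ itself yields a separate, generically strictly larger, exponent $p'$ with $\ce(v)\lesssim|v|^{p'}+1$. Normalizing $\int\ce(f)\leq 1$ gives $\|f_1\|_{L^p}\lesssim 1$ and hence, by classical Hardy, $\|F_1\|_{L^p}\lesssim 1$; but to deduce $\int\ce(2F_1/C)\lesssim 1$ one needs $\ce(v)\lesssim|v|^p+1$, which fails whenever $p'>p$. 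An $L^p$ bound on $F_1$ simply does not control its $\lk$-modular when $\ce$ grows faster than $|v|^p$, so the argument as written only works when $\ce$ is comparable to a pure power, which trivializes the proposition. The closing remark that ``$\Delta_2$ on $\ce^*$ is not really needed'' is also incorrect: that assumption is exactly what produces an exponent $p>1$, and without it even the $L^p$-Hardy step fails and the proposition is false.

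The paper's proof takes a genuinely different route that avoids the exponent mismatch by never dominating $\ce$ by a power. Writing $F(t,x)=\int_0^1 f(st,x)\,ds$ as an average over dilations, one uses the superlinear-scaling inequality $\ce(v)\leq s\,\ce(vs^{-1/p})=:\ce_s(v)$ for $|v|\geq r$, $s\in(0,1)$ (again a consequence of $\Delta_2$ on $\ce^*$). Proposition \ref{la0} gives $\|g\|_{\lk}\lesssim\|g\|_{\lkss}$ with a constant uniform in $s$, while a direct scaling of the Luxemburg norm shows $\|f(s\cdot,\cdot)\|_{\lkss}\leq s^{-1/p}\|f\|_{\lk}$. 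Minkowski's integral inequality for the Luxemburg norm then yields $\|F\|_{\lk}\lesssim\bigl(\int_0^1 s^{-1/p}\,ds\bigr)\|f\|_{\lk}$, which converges precisely because $p>1$.
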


\begin{proof} It follows from \cite[Lemma 2.3.16]{Gwiazda} that there exist $r>0$ and $p>1$ such that \be \frac {v\cdot  \nabla \ce(v)} {\ce(v)}\geq p \ee for $|v|\geq r$.   
	
	We first observe that \be \label{a09} \ce (sv)\leq \ce (v)s^p, s\in (0,1), v\in \R^n, \ |sv|\ge r.\ee Indeed, fix $s_0\in (0,1)$ with $|s_0 v|\ge r$, and consider the function $$\phi(s)=\log\ce (sv)-\log(\ce (v)s^p), \ s\in [s_0,1].$$ Then $$\phi'(s)=s^{-1}\frac {sv\cdot  \nabla \ce(sv)} {\ce(sv)}-s^{-1}p\geq 0.$$ Since $\phi(1)=0$, we infer $\ce (s_0v)\leq \ce (v)s_0^p$.
	
	It follows from \eqref{a09} that \be \label{a14} \ce (v)\leq s\ce \left(v s^{-1/p}\right), s\in (0,1),\ |v|\ge r.\ee 
	Denote $\ce_s (v):=s\ce \left(v s^{-1/p}\right)$ and $f_s(t,x):=f(st,x), \ s\in(0,1)$. Then \be \label{a165} F(t,x)=\int_0^1 f_s(t,x)\, ds\ee and
	\be \label{a16} \ce (v)\leq \ce_s \left(v\right), \ |v|\ge r.\ee 
	We now claim that \be\label{toint1}\|f_s\|_{\lkss((0,T)\times \Omega)}\le s^{-1/p} \|f\|_{\lk((0,T)\times \Omega)}.\ee Indeed, \begin{multline*} \|f_s\|_{\lkss((0,T)\times \Omega)}= \inf\left\{\lambda>0: \int_{(0,T)\times \Omega}\ce_s\left(\frac {f(ts,x)}{\lambda}\right)\,dx\,dt\leq 1\right\}\\= \inf\left\{\lambda>0: \int_{(0,sT)\times \Omega}\ce_s\left(\frac {f(t,x)}{\lambda}\right)\,dx\,dt\leq s\right\}\\ \le \inf\left\{\lambda>0: \int_{(0,T)\times \Omega}\ce_s\left(\frac {f(t,x)}{\lambda}\right)\,dx\,dt\leq s\right\}\\= \inf\left\{\lambda>0: \int_{(0,T)\times \Omega}\ce\left(\frac {f(t,x)}{\lambda s^{1/p}}\right)\,dx\,dt\leq 1\right\}\\=s^{-1/p} \|f\|_{\lk((0,T)\times \Omega)}. \end{multline*}
	
	Employing convexity of the Luxemburg norm, Proposition \ref{la0}, \eqref{a165} and \eqref{a16}, and integrating \eqref{toint1} w.r.t. $s\in (0,1)$, we conclude that
	\begin{multline*} \|F\|_{\lk((0,T)\times \Omega)}=\left\|\int_0^1 f_s\, ds\right \|_{\lk((0,T)\times \Omega)}\\ \le \int_0^1\left\| f_s\right \|_{\lk((0,T)\times \Omega)}\, ds \le C_r  \int_0^1\left\| f_s\right \|_{\lkss((0,T)\times \Omega)}\, ds\le \frac {p C_r}{p-1}  \|f\|_{\lk((0,T)\times \Omega)}.\end{multline*}
	
\end{proof}

\section{Ballistic optimal transport} \label{bot} Let us briefly describe the link of our dual problem \eqref{o:conc} with the optimal transportation problems. We employ a geometric intuition that did not explictly appear in the previous works. 

Let us start from the following heuristic setting.  Let $\mathfrak M$ be a complete, connected Riemannian manifold. 
Let $X:\mathfrak M\to T\mathfrak M$ be a given vector field on $\mathfrak M$, and let $x_T\in \mathfrak M$ be a prescribed point. 
By a \emph{ballistic}\footnote{The wording is borrowed from \cite{BG19}.} geodesic problem we mean finding the curve\footnote{The subscript $t$ is not a time derivative but just the value at time $t$.}  $\gamma_t\subset \mathfrak M$ that minimizes
\be  \min_{{\dot\gamma_0=X(\gamma_0)},\ \gamma_T=x_T}\frac 12 \int_0^T\langle \dot\gamma_t,\dot\gamma_t \rangle_{\gamma_t}\, dt, \label{e:b1}\ee
i.e., we know the final position and the initial direction of an unknown geodesic. This can be viewed as a mixed boundary condition (``inhomogeneous'' Neumann at $t=0$, Dirichlet at $t=T$). 

Assume that the vector field $X$ that determines the direction of the ``howitzer'' at every point of $\mathfrak M$  is a potential field, i.e., 
$X=\grad \Phi$ for some $\Phi:\mathfrak M \to \R$.  Then \eqref{e:b1} is equivalent to the Hopf-Lax formula
\be \label{hl}  \min_{x_0\in \mathfrak M}\Phi(x_0)+\frac 1 2 dist^2(x_0,x_T),\ee where $dist$ is the Riemannian distance on $\mathfrak M$ (here we are merely interested in the minimizer $x_0$ that determines the starting point of the unknown geodesic and not in the optimal value itself). 

Formula \eqref{hl} makes sense even if we have no manifold structure and $\mathfrak M$ is merely a metric space, not necessarily connected. This is particularly relevant for the metric spaces with ``Riemannian flavour'', see \cite[p. 6]{MS20} for a list of examples of such spaces. Problem \eqref{hl} indeed appears in infinite-dimensional and metric-geometric contexts, including the context of optimal transport.  The generalization to $p>1$, i.e., \be \label{hlp}  \min_{x_0\in \mathfrak M}\Phi(x_0)+\frac 1 p dist^p(x_0,x_T)\ee was studied in \cite{am13} in a metric context. Problems of this kind are called marginal entropy-transport problems in \cite{LMS18}. They are crucial ingredients of de Giorgi's minimizing movement scheme,  also known as the JKO scheme \cite{JKO,AGS}.

Consider now the quadratic Hamilton-Jacobi equation \begin{equation*} \p_t \psi+\frac 1 2 |\nabla \psi|^2=0, \ \psi(0,x)=\psi_0(x), (t,x)\in [0,T]\times \Omega.\end{equation*} Setting $v=\nabla \psi$, we rewrite it in the form \be \label{e:hjv}  \p_t v+\frac 1 2 \nabla \tr (v\otimes v)=0, \ v(0)=\nabla \psi_0. \ee 
Note that \eqref{e:hjv} is  a particular case of our abstract equation \eqref{o:aeuler} with $\cf(v)=v\otimes v$, $L=-\frac 1 2 \nabla \tr$.

Let $\wei\equiv 1$. As explained in \cite[Section 2.2]{V22}, the dual problem \eqref{o:conc} for the quadratic Hamilton-Jacobi equation  may be rewritten (this is formal but can be made rigorous) as \be\label{e:concinth}   -\int_{\Omega} \psi_0 \rho(0)\,dx  -\frac 1 2 \int_0^T\int_{\Omega} \rho^{-1}|q|^2\,dx \, dt \to \sup \ee subject to the constraints \be\label{e:constrinth}  \p_t \rho+\di q=0, \quad \rho(T)=1,\quad \rho\geq 0.\ee 
Rescaling if needed, we can assume $T=1$ and $|\Omega|=1$. Defining the functional $\Psi(\rho):=\int_{\Omega} \psi_0 d\rho$ on the Wasserstein space $\mathcal P(\Omega)$ of probability measures on $\Omega$, multiplying by $-1$ and leveraging the Benamou-Brenier formula \cite{villani03topics,BB2000}, we can  recast \eqref{e:concinth}, \eqref{e:constrinth} in the Hopf-Lax  form \be \label{hl2}  \min_{\rho_0\in \mathcal P(\Omega)}\Psi(\rho_0)+\frac 1 2 W_2^2(\rho_0,\rho_1).\ee Here $W_2$ is the quadratic Wasserstein distance \cite{villani03topics}, and $d\rho_1$ is the Lebesgue measure $dx$ on $\Omega$.  Hence, in this very particular but instructive case, the dual problem \eqref{o:conc} can be viewed as a ballistic problem on the Wasserstein space. 

The observations above can be generalized to the case of the $p$-Wasserstein space (and very likely to the Orlicz-Wasserstein spaces \cite{sturm2011,am13}). The generating Hamilton-Jacobi equation is $$ \p_t v+\frac {p-1} p \nabla \tr \cf(v)=0$$ with \be \cf(v)= |v|^{\frac{2-p}{p-1}} v\otimes v.\label{e:last}\ee We omit the implementation but call attention to the details that the matrix function $\cf$ in 
\eqref{e:last} is $\Lam$-convex but not Loewner convex, and that the resulting Hopf-Lax formula is similar to \eqref{hlp} because the $p$-Wasserstein space is not `Riemannian-like'' unless $p=2$.

\end{document}